\numberwithin{equation}{subsection}
\numberwithin{figure}{subsection}
\newtheorem{theorem}{Theorem}[subsection]
\newtheorem{lemma}[theorem]{Lemma}
\newtheorem{remark}[theorem]{Remark}
\newtheorem{proposition}[theorem]{Proposition}
\newtheorem{corollary}[theorem]{Corollary}
\newtheorem{examples}[theorem]{Examples}
\newtheorem{remarks}[theorem]{Remarks}
\newenvironment{proof}[1][Proof:]{\begin{trivlist}
\item[\hskip \labelsep {\bfseries #1}]}{\end{trivlist}}
\newcommand{\qed}{\nobreak \ifvmode \relax \else
      \ifdim\lastskip<1.5em \hskip-\lastskip
      \hskip1.5em plus0em minus0.5em \fi \nobreak
      \vrule height0.75em width0.5em depth0.25em\fi}
\font\ccc =msbm10 \font\cccc = msbm7
\date{ }
\begin{document}

\title{On the Preference Relations with \\
Negatively Transitive Asymmetric Part. I}

\author{Maria Viktorovna Droganova\footnote{Department of
Economics, Duke University, Durham, NC 27708-0097, USA}\\
Valentin Vankov Iliev\footnote{Institute of Mathematics and
Informatics, Bulgarian Academy of Sciences, Sofia 1113, Bulgaria}}

\maketitle

\begin{abstract}

Given a linearly ordered set $I$, every surjective map $p\colon A\to
I$ endows the set $A$ with a structure of set of preferences by
``replacing" the elements $\iota\in I$ with their inverse images
$p^{-1}(\iota)$ considered as ``balloons" (sets endowed with an
equivalence relation), lifting the linear order on $A$, and
``agglutinating" this structure with the balloons. Every ballooning
$A$ of a structure of linearly ordered set $I$ is a set of
preferences $A$ whose preference relation (not necessarily complete)
is negatively transitive and every such structure on a given set $A$
can be obtained by ballooning of certain structure of a linearly
ordered set $I$, intrinsically encoded in $A$. In other words, the
difference between linearity and negative transitivity is
constituted of balloons. As a consequence of this characterization,
under certain natural topological conditions on the set of
preferences $A$ furnished with its interval topology, the existence
of a continuous generalized utility function on $A$ is proved.

\end{abstract}

\section*{Introduction}

The aim of this paper is twofold: To characterize the preference
relations with negatively transitive asymmetric part and to prove
the existence of a continuous generalized utility function on the
corresponding set of preferences furnished with its interval
topology with respect to which it is connected and separable. Since
the authors can not find appropriate source(s) for citing, they
allow themselves to gather here from a unique point of view material
that can be found in various texts using varying terminology and
notation at different levels of rigor and generality. Moreover, the
systematic use of saturated subsets shortens and clarifies the
exposition. In what follows, we use, in general, the terminology
from~\cite{[35]}.

Below we use the terms ``preorder" as a synonym of ``preference
relation" and ``preordered set" as a synonym of ``set of
preferences".

Any preference relation $R$ on a set $A$ is a disjoint union of its
symmetric part $E=I_R$ (an equivalence relation) and its asymmetric
part $F=P_R$ (an asymmetric and transitive relation). Moreover, $F$
is $E$-saturated. Conversely, any ordered pair $(E,F)$ of such
relations produces a reflexive and transitive relation $R$ by
forming their union: $R=E\cup F$. The rule $R\mapsto (I_R,P_R)$
establishes a one-to-one correspondence between the set of of all
reflexive and transitive binary relations $R$ on $A$ and the set of
all ordered pairs $(E,F)$, where $E$ is an equivalence relation on
$A$, $F$ is an asymmetric transitive binary relation on $A$, which
is $E$-saturated, and $E\cap F=\emptyset$.

The nature of the connection between negative transitivity and
completeness is the main subject of the present paper. In
mathematical economics, the idea descends from Fishburn and his
Theorem 2.1 in~\cite[Ch. 2, Sec. 2.2]{[15]}. Under the assumptions
of completeness of the preference relation $R$ and negative
transitivity of its asymmetric part $F$, he notes that ${\cal E}=
E_F$ ($xE_Fy$ means ``$x$ and $y$ are not $F$-comparable) is an
equivalence relation and that the partition $\{F,{\cal E},F^{-1}\}$
of $A^2$ can be factorized with respect to ${\cal E}$, thus
producing a linearly ordered factor-set. In mathematics this idea is
older and one can find it, for example, in~\cite[Ch. III, Sec. 1,
Exercise 4]{[2]}, where N.~Bourbaki factorizes a partially ordered
set with respect to the transitive closure of the relation ``$x$ and
$y$ are not comparable or $x=y$" and obtains a linearly ordered
factor-set. The result is presented here, faintly generalized for a
preorder $R$, as Theorem~\ref{5.10.15}. If we strengthen the
condition of the later theorem by assuming that the asymmetric part
$F$ of $R$ is negative transitive, we obtain our Theorem~\ref{2.1.1}
which generalizes Fishburn's theorem. More precisely, the negative
transitivity allows us to show that $F$ is ${\cal E}$-saturated
(see~\ref{1.5}) and the triple $\{F,E,F^{-1}\}$, where $E$ is the
symmetric part of $R$, can be factored out with respect to the
equivalence relation ${\cal E}$. The triple of factors $\{F/{\cal
E},E/{\cal E},F^{-1}\!/{\cal E}\}$ is a partition of the factor-set
$A'=A/{\cal E}$, we have $E/{\cal E}=D_{A'}$ (the diagonal of
$A'^2$), $F^{-1}\!/{\cal E}=(F/{\cal E})^{-1}$, and $R'=D_{A'}\cup
(F/{\cal E})$ is a linear order on $A'$. Note that the ${\cal
E}$-equivalence classes are the fibres of the canonical surjective
map $c\colon A\to A'$, $x\mapsto\hbox{\rm\ the equivalence class of\
} x$. Since ${\cal E}=E\cup E_R$, every ${\cal E}$-equivalence class
is a disjoint union of $E$-equivalence classes, that is,
indifference curves, and the members of different indifference
curves in a fixed ${\cal E}$-equivalence class are not
$R$-comparable. In particular, every ${\cal E}$-equivalence class is
$E$-saturated and hence inherits from $E$ its own equivalence
relation. In case the later equivalence relation on every ${\cal
E}$-equivalence class is trivial, that is, every ${\cal
E}$-equivalence class coincides with an $E$-indifference curve, or,
what is the same, when ${\cal E}=E$, then $E_R=\emptyset$ and we
obtain the completeness of $R$ and Fishburn's Theorem 2.1.
Conversely, starting with a linearly ordered set $I$ and with a
family $(A_\iota)_{\iota\in I}$ of balloons, that is, sets $A_\iota$
each endowed with an equivalence relation $E_\iota$, $\iota\in I$,
we construct in Theorem~\ref{2.5.5} a set $A$ --- the coproduct of
the family $(A_\iota)_{\iota\in I}$ with its natural projection
$p\colon A\to I$ and a preference relation $R$ on $A$ with
negatively transitive asymmetric part $F=P_R$, such that the
equivalence relation $E=I_R$ induces on every balloon
$p^{-1}(\iota)=A_\iota$ the equivalence relation $E_\iota$.
Moreover, the equivalence relation associated with the partition
$(A_\iota)_{\iota\in I}$ of $A$ is identical to ${\cal E}=E_F$, the
linear order on $I$ is the factor-relation of $R$, and the
corresponding strict order on $I$ is the factor-relation of $F$ with
respect to ${\cal E}$. Thus, every ballooning $A$ of a structure of
linearly ordered set $I$ has a structure of set of preferences $A$
whose preference relation has negatively transitive asymmetric part
and every such structure on a given set $A$ can be obtained by
ballooning of certain structure of a linearly ordered set $I$,
intrinsically encoded in $A$.

G.~Debreu (not without influence from N.~Bourbaki) studies
in~\cite{[10]} the existence of a continuous utility representation
$u\colon A\to \hbox{\ccc R}$ of a complete set of preferences $A$.
Lemma~\ref{2.10.1} reduces the problem to the existence of a
strictly increasing and continuous map of the factor-set $A/E$ into
the real line $\hbox{\ccc R}$. Under the condition of connectedness
and separability on $A$ endowed with a ``natural" topology (that is,
topology which is finer than the interval topology on $A$),
G.~Debreu proves this existence by refereing to~\cite[(6.1)]{[10]}.

In his monograph~\cite{[15]}, P.~C. Fishburn proves (Theorem 2.2)
that if the set $A$ is furnished with an asymmetric relation $F$
which is negatively transitive and if the factor-set $A/{\cal E}$ is
countable, then there exists a utility function $u\colon
A\to\hbox{\ccc R}$. Further, in~\cite[Theorem 3.1]{[15]} he presents
a necessary and sufficient condition for existence of such utility
function, which is, in fact, the Birkhoff's criterium~\cite[Ch. III,
Theorem 2]{[1]} for the linearly ordered set $A/{\cal E}$ to be
embedded in a order-preserving way into the real line.

In~\cite[Sec. 5]{[32]}, E.~A. Ok argues that ``...utility theory can
be beneficially extended to cover incomplete preference relations"
and, moreover, ``Identifying a useful set of conditions on preorders
that would lead to such a representation result [that is, functional
representation of preorders] is an open problem worthy of
investigation".

Our setup uses the sufficient condition (connectedness and
separability) from~\cite[Ch. IV, Sec. 2, Exercise 11 a]{[5]} for the
the existence of a homeomorphic embedding of factor-set $A/{\cal E}$
into the real line (see also Theorem~\ref{1.8.80}). If the
preference relation on $A$ has negatively transitive asymmetric
part, there is a close bond between the topologies of $A$ and
$A/{\cal E}$ described in Proposition~\ref{5.20.1}. Using this
strong relationship, in case $A$ is connected and separable we prove
the existence of a continuous generalized utility function on $A$
(Theorem~\ref{2.10.5}).

The aim of the appendix is to give a proof of Theorem~\ref{1.8.80}
which asserts that any linearly ordered set $A$ endowed with the
interval topology, which is connected and separable possesses a
strictly increasing homeomorphism onto one of the intervals in the
real line with endpoints $0$ and $1$. The text represents the
slightly dressed up notes of the authors during reading of~\cite[Ch.
III, Sec. 1]{[2]}, the problems from~\cite[Ch. IV, Sec. 2, Exercises
6, 7, 8, 9, 11]{[5]}, and other relevant literature.

Portions of this paper are parts of the senior thesis of the first
author made under the supervision of the second one. We intend to
devote Part 2 of the paper to some economic applications of our
results.

\section{Mathematical Background}

\label{1}

Below, for the sake of completeness and non-ambiguous reading of
this paper, we present the main statements which are used. Moreover,
we show how saturatedness appears naturally in the context of
factorization.

If the opposite is not stated explicitly, we assume that the
occurrence of the variables $x$, $y$, $z,\ldots$ in a statement is
bounded by the universal quantifier (``for all").

\subsection{Factorizations}

\label{1.5}

Let $A$ be a non-empty set and let $E$ be an equivalence relation on
$A$. For $x\in A$ we denote by $C_x$ the equivalence class with
respect to $E$ (or, $E$-equivalence class) with representative $x$.
The subset $B\subset A$ is said to be \emph{saturated with respect
to $E$}, or \emph{$E$-saturated}, if $x\in B$ implies $C_x\subset
B$. The factor-set $A/\!E$ (that is, the set of all equivalent
classes) is a partition of $A$ and $c\colon A\to A/\!E$, $x\mapsto
C_x$, is the canonical surjective map. Sometimes we write
$\bar{x}=c(x)$.

The relation $R$ is called \emph{left $E$-saturated} if $xRy$ and
$x'Ex$ imply $x'Ry$, \emph{right $E$-saturated} if $xRy$ and $y'Ey$
imply $xRy'$, and \emph{$E$-saturated} if $xRy$, $x'Ex$, and $y'Ey$
imply $x'Ry'$.

Let $E$ be an equivalence relation on $A$. The product ${\cal
E}=E\times E$ is an equivalent relation on $A\times A$ and let
$c_2\colon A\times A\to (A\times A)/\!{\cal E}$ be the corresponding
canonical surjective map. The bijection $(c(x),c(y))\mapsto
c_2(x,y)$ identifies the sets $(A/\!E)\times (A/\!E)$ and $(A\times
A)/\!{\cal E}$ and then the canonical surjective map
\[
c_2\colon A\times A\to (A/\!E)\times (A/\!E),\hbox{\
}c_2(x,y)=(\bar{x},\bar{y}),
\]
induces via the rule $X\mapsto c_2(X)$, $X\subset A\times A$, a
canonical surjective map
\begin{equation}
c_2\colon 2^{A\times A}\to 2^{\left(A/\!E\right)\times
\left(A/\!E\right)}.\label{1.5.1}
\end{equation}

The map $c_2$ from~(\ref{1.5.1}) establishes a bijection
$R\mapsto\bar{R}$, where $\bar{R}=c_2(R)$, between the set of all
$E$-saturated binary relations $R$ on $A$ and the set of all binary
relations $\bar{R}\subset (A/\!E)\times (A/\!E)$ on the factor-set
$A/\!E$. We have $xRy$ if and only if $\bar{x}\bar{R}\bar{y}$.

Thus, any $E$-saturated binary relation $R$ on the set $A$ produces
a relation $\bar{R}=c_2(R)$ on the factor-set $A/E$ which we call
\emph{factor-relation} of $R$ with respect to the equivalence
relation $E$. We have $xRy$ if and only if $\bar{x}\bar{R}\bar{y}$.

Below we present another natural factorization $R'$ of a binary
relation $R$ on a set $A$ with respect to an equivalence relation
$E$. Let $A'=A/\!E$ be the factor-set and let $A\to A'$, $x\mapsto
\bar{x}$, be the canonical surjective map. We define the binary
relation $R'$ on $A'$ by the rule
\[
\bar{x}R'\bar{y}\hbox{\rm\ if for any\ } x'\in \bar{x} \hbox{\rm\
there exists\ }y'\in \bar{y} \hbox{\rm\ such that\ } x'Ry'.
\]
and call $R'$ \emph{weak factor-relation} of $R$ with respect to the
equivalence relation $E$, thus generalizing the notion of
factor-relation introduced above. This is also a generalization
of~\cite[Ch. III, Sec. 1, Exercise 2, a]{[2]}.

\subsection{Derivative relations}

\label{1.1}

Let $R$ be a binary relation on a non-empty set $A$. We denote by
$R^{-1}$ the inverse relation and by $R^c$ the complementary
relation. We set $I_R=R\cap R^{-1}$, $U_R=R\cup R^{-1}$,
$P_R=R\backslash I_R$ , and $E_R=U_R^c$. The relation $I_R$ is the
symmetric part of $R$ and the relation $P_R$ is the asymmetric part
of $R$. In case $R$ is reflexive and transitive (that is, a
preorder) $I_R$ is an equivalence relation and $P_R$ is asymmetric
and transitive. When, in addition, $I_R=D_A$, where $D_A$ is the
diagonal of $A^2$, the preorder $R$ is a partial order. A
\emph{balloon} is a preordered set $A$ furnished with symmetric
preorder $R$. Thus, $R$ is an equivalence relation: $R=I_R$.

\subsection{Saturatedness and Transitivity}

\label{5.1}

\begin{lemma} \label{5.1.1} Let $A$ be a set, let $R$ be
a reflexive binary relation on $A$, and let $E$ be an equivalence
relation on $A$. If $R\subset E$, then $R$ is weakly $E$-saturated.

\end{lemma}

\begin{proof} Let  $x'Ex$ and $xRy$. Then $x'Ey$ and if we set
$y'=x'$, then $yEy'$ and $x'Ry'$ because $R$ is reflexive.

\end{proof}

\begin{proposition} \label{5.1.5} Let $R$ be a binary relation on
the set $A$ and let $E$ be an equivalence relation on $A$. Let
$A'=A/E$ be the factor-set and let $c\colon A\to A'$, $x\mapsto
\bar{x}$, be the canonical surjective map. Then the canonical map
$c$ is increasing if and only if $R$ is weakly $E$-saturated.

Under the condition that $R$ is weakly $E$-saturated, the following
statements hold:

{\rm (i)} If $R$ is reflexive, then $R'$ is reflexive.

{\rm (ii)} If $R$ is transitive, then $R'$ is transitive.

{\rm (iii)} The relation $R'$ is antisymmetric if and only if $R$
satisfies the condition
\begin{equation}
xRy,\hbox{\ } yRz,\hbox{\rm\ and\ } xEz \hbox{\rm\ imply\ } xEy.
\label{5.1.10}
\end{equation}

{\rm (iv)} If $R$ is $E$-saturated, then $R'$ coincides with the
factor-relation $\bar{R}$. If, in addition, $I_R\subset E$, then
condition~(\ref{5.1.10}) holds.

{\rm (v)} Let $B$ be a set endowed with a binary relation $S$. For
any increasing map $p\colon A\to B$ which is constant on the members
of the factor-set $A'=A/E$ there exists a unique increasing map
$p'\colon A'\to B$ such that the diagram
\begin{diagram}
A           &         & \\
\dTo_{c} & \rdTo^{p}& \\
A'        & \rTo^{p'} & B\\
\end{diagram}
is commutative. If $p$ is surjective, then $p'$ is surjective. If,
in addition,
\begin{equation}
E=\{(x,y)\in A^2\mid p(x)=p(y)\},\label{5.1.15}
\end{equation}
then $p'$ is an increasing bijection.

\end{proposition}

\begin{proof} Let us suppose that $c$ is an increasing map, let $xRy$,
and let $x'E x$, that is, $x'\in\bar{x}$. We have $\bar{x}R'\bar{y}$
which means there exists $y'\in\bar{y}$ with $x'Ry'$ and hence $R$
is weakly $E$-saturated. Conversely, if $R$ is weakly $E$-saturated
and if $xRy$, then for any $x'\in\bar{x}$ there exists
$y'\in\bar{y}$ with $x'Ry'$, therefore $\bar{x}R'\bar{y}$.

The proofs of parts {\rm (i)} and {\rm (ii)} are immediate.

{\rm (iii)} Let $R$ satisfies condition~(\ref{5.1.10}) and let
$\bar{x}R'\bar{y}$, $\bar{y}R'\bar{x}$. In other words, there exist
$y'\in \bar{y}$ and $x'\in \bar{x}$, such that $xRy'$ and $y'Rx'$.
Since $xEx'$, we obtain $xEy'$, that is, $\bar{x}=\bar{y}$.
Conversely, let $R'$ be antisymmetric and let us suppose that $xRy$,
$yRz$, and $xEz$. In particular, $\bar{x}R'\bar{y}$,
$\bar{y}R'\bar{x}$, and this yields $\bar{x}=\bar{y}$, that is,
$xEy$.

{\rm (iv)} We have $\bar{R}\subset R'$. Now, let $\bar{x}R'\bar{y}$
and let $x'\in\bar{x}$, $y'\in\bar{y}$. We have that there exists
$y''\in\bar{y}$ with $x'Ry''$. Since $y''Ey'$, the saturatedness of
$R$ yields $x'Ry'$. Thus, we have $\bar{x}\bar{R}\bar{y}$.

Now, let us suppose, in addition, that $I_R\subset E$ and let $xRy$,
$yRz$, and $xEz$. Since $R$ is $E$-saturated, we obtain $yRx$, hence
$xI_Ry$ and this, in turn, implies $xEy$.

{\rm (v)} For any map $p\colon A\to B$ which is constant on the
members of the factor-set $A'=A/E$ there exists a unique map
$p'\colon A'\to B$ such that $p=p'\circ c$. Let $\bar{x}R'\bar{y}$
and we can suppose that $xRy$. Then $p(x)Sp(y)$ and this relation
can be rewritten as $p'(\bar{x})Sp'(\bar{y})$. Therefore $p'$ is an
increasing map. Since $p(x)=p'(\bar{x})$ the surjectivity of $p$
implies the surjectivity of $p'$. Under the
condition~(\ref{5.1.15}), if $p(x)=p(y)$, then $\bar{x}=\bar{y}$,
that is, $p'$ is a bijection.

\end{proof}

\begin{proposition}\label{5.1.65} Let $A$ be a set and let $F$ be
an asymmetric and negatively transitive relation on $A$. Then ${\cal
E}=E_F$ is an equivalence relation and the relation $F$ is
transitive and ${\cal E}$-saturated.

\end{proposition}

\begin{proof} The proofs that ${\cal E}$ is an equivalence relation
and that the asymmetry and negative transitivity of $F$ imply
transitivity of $F$ are straightforward. Since $\{{\cal
E},F,F^{-1}\}$ is a partition of $A^2$, we obtain that $F$ is ${\cal
E}$-saturated. Indeed, let $x'{\cal E}x$ and $xFy$. If $x'{\cal E}y$
(respectively, $yFx'$), then $x{\cal E}y$ (respectively, $xFx'$) ---
an absurdity. Thus, $x'Fy$. Similarly, let $xFy$ and $y{\cal E}y'$.
If $x{\cal E}y'$ (respectively, $y'Fx$), then $x{\cal E}y$
(respectively, $y'Fy$) --- an absurdity. Thus, $xFy'$.

\end{proof}

\subsection{Indifference and Completeness}

\label{5.10}

Let $A$ be a set. A binary relation $R$ on $A$ is said to be
\emph{indifference} if $R$ is reflexive and symmetric. In case $R$
is an indifference on $A$, its transitive closure
$R^{\left(t\right)}$ is an equivalence relation on $A$ and the
$R^{\left(t\right)}$-equivalence classes are called
\emph{$R$-indifference curves}. Thus, the factor-set
$A/\!R^{\left(t\right)}$ consists of all $R$-indifference curves. In
case the relation $R$ is self-understood, we denote by $\bar{x}$ the
$R$-indifference curve with representative $x\in A$.

\begin{lemma}\label{5.10.5} Let the binary relation $R$ on $A$
be asymmetric and transitive.

{\rm (i)} The relation $E_R$ is an indifference.

{\rm (ii)} If  $\bar{x}$ and $\bar{y}$ are two different
$R$-indifference curves, then any two elements $x'\in\bar{x}$ and
$y'\in\bar{y}$ are $R$-comparable and if $xRy$, then $x'Ry'$.

\end{lemma}

\begin{proof} {\rm (i)} The proof is immediate.

{\rm (ii)} Since $\bar{x}\neq\bar{y}$, any two elements
$x'\in\bar{x}$ and $y'\in\bar{y}$ are $R$-comparable. Let, for
example, $xRy$ and let $y'\in\bar{y}$. Let $y=b_0,b_1,\ldots,
b_n=y'$ be the finite sequence such that $b_iE_Rb_{i+1}$,
$i=0,1,\ldots,n-1$. Note that $b_i\in \bar{y}$ for all
$i=0,1,\ldots,n-1$. We use induction with respect to $i$ to prove
that $xRb_i$. This statement is true for $i=0$ and let us suppose
that $xRb_{i-1}$. If $b_iRx$, then the transitivity of $R$ imply
$b_iRb_{i-1}$ --- a contradiction. Thus, $xRb_i$ for any
$i=0,1,\ldots,n$ and, in particular, $xRy'$ for $y'\in\bar{y}$.
Using the above argumentation for $R^{-1}$ (which is also reflexive
and transitive) and taking into account that $E_R=E_{R^{-1}}$, we
obtain that $xRy$ and $x'\in\bar{x}$ yield $x'Ry$. Thus, if $xRy$,
$x'\in\bar{x}$, and $y'\in\bar{y}$, then $x'Ry'$.

\end{proof}

\begin{theorem}\label{5.10.15} Let the binary relation $R$ on $A$
be reflexive and transitive, let $F=P_R$ be its asymmetric part, and
let $E_F^{\left(t\right)}$ be the transitive closure of the
indifference $E_F$. One has:

{\rm (i)} The relation $R$ is weakly
$E_F^{\left(t\right)}$-saturated.

{\rm (ii)} The week factor-relation $R'$ on the factor-set
$A'=A/\!E_F^{\left(t\right)}$ is reflexive, transitive,
antisymmetric, and complete; $A'$ endowed with $R'$ is a linearly
ordered set.

\end{theorem}

\begin{proof} {\rm (i)} Let $xRy$ and $xE_F^{\left(t\right)}x'$.

Case 1. $\bar{x}=\bar{y}$.

In this case $yE_F^{\left(t\right)}x'$ and it is enough to note that
$x'Rx'$.

Case 2. $\bar{x}\neq\bar{y}$.

Lemma~\ref{5.10.5}, {\rm (ii)}, shows that $x$ and $y$ are
$R$-comparable and if $xRy$, then, in particular, for any
$x'\in\bar{x}$ there exists $y'\in\bar{y}$ such that $x'Ry'$.

{\rm (ii)} The weak factor-relation $R'$ on the factor-set $A'$ is
reflexive and transitive because of Proposition~\ref{5.1.5}, {\rm
(i)}, {\rm (ii)}. In accord with Lemma~\ref{5.10.5}, {\rm (ii)}, if
$\bar{x}\neq\bar{y}$, then $x$ and $y$ are $R$-comparable, say
$xRy$, and we obtain $\bar{x}R'\bar{y}$. Therefore the week
factor-relation $R'$ on $A'$ is complete. Let $\bar{x}R'\bar{y}$ and
$\bar{y}R'\bar{x}$. If we suppose $\bar{x}\neq\bar{y}$, then
Lemma~\ref{5.10.5}, {\rm (ii)}, yields $xRy$ and $yRx$ which
contradicts the asymmetry of $R$. Thus, $\bar{x}=\bar{y}$ and $R'$
is an antisymmetric relation. In other words, $R'$ is a linear order
on $A'$.

\end{proof}

\subsection{Coproducts of preordered sets}

\label{5.15}

Here we remind the definition of coproduct of a family
$(A_\iota)_{\iota\in I}$ of sets (see, for example,~\cite[Ch. II,
Sec. 4, $n^o\ 8$]{[2]}) and endow this coproduct with a structure of
preordered set, given such structures on the members $A_\iota$ of
the family and given a structure of partially ordered set on the
index set $I$.

For any $\iota\in I$ there exists a bijection $A_\iota\to
A_\iota\times\{\iota\}$, $x\mapsto (x,\iota)$. The set
$A=\cup_{\iota\in I}(A_\iota\times \{\iota\})$ is called the
\emph{coproduct} of the family $(A_\iota)_{\iota\in I}$.  Let
$U=\cup_{\iota\in I}A_\iota$ be the union of this family. In case
$A_\iota\cap A_\kappa=\emptyset$ for any $\iota\neq\kappa$ there
exists a projection $p\colon U\to I$, $x\mapsto\iota$, where $x\in
A_\iota$. We have a bijection between the union $U$ and the
coproduct $A$, given by the rule
\[
U\to A,\hbox{\ } x\mapsto (x,p(x)).
\]
When the family $(A_\iota)_{\iota\in I}$ has pairwise disjoint
members we identify the union $U$ and the coproduct $A$ via the
above bijection and define the \emph{projection} $p\colon A\to I$,
$x\mapsto\iota$ for $x\in A_\iota$.

Below, when discussing the coproduct of a family of sets, we
implicitly suppose without any loss of generality that the members
of this family are pairwise disjoint. In other words, ``coproduct"
is a shorthand for ``union of pairwise disjoint sets".

\begin{proposition}\label{5.15.1} Let $(A_\iota)_{\iota\in I}$ be a
family of preordered sets, let $(R_\iota)_{\iota\in I}$ be the
family of the corresponding preorders, and let the index set $I$ be
partially ordered. Let $A=\coprod_{\iota\in I}A_\iota$ be the
coproduct of the family $(A_\iota)_{\iota\in I}$, let $p\colon A\to
I$, $x\mapsto\iota$ for $x\in A_\iota$, be the natural projection,
and let ${\cal E}$ be the equivalence relation associated with the
partition $(A_\iota)_{\iota\in I}$ of the coproduct $A$.

{\rm (i)} The coproduct $A$ of this family has a structure of
preordered set with preorder $R$ defined by the rule
\[
xRy \hbox{\rm\ if\ } "p(x) < p(y)" \hbox{\rm\ or\ } "p(x)=p(y)
\hbox{\rm\ and\ } xR_{p\left(x\right)}y",
\]
where $<$ is the asymmetric part of the partial order $\leq$ on $I$.

{\rm (ii)} One has
\[
xI_Ry \hbox{\rm\ if and only if\ }"p(x)=p(y)" \hbox{\rm\ and\ }
"xI_{R_{p\left(x\right)}}y".
\]

{\rm (iii)} One has
\[
xP_Ry \hbox{\rm\ if and only if\ } "p(x)<p(y)" \hbox{\rm\ or\ }
"p(x)=p(y) \hbox{\rm\ and\ } xP_{R_{p\left(x\right)}}y".
\]

Let, in addition, $(A_\iota)_{\iota\in I}$ be a family of balloons.
Then one has:

{\rm (iv)} $xP_Ry$ if and only if $p(x)<p(y)$.

{\rm (v)} The asymmetric part $P_R$ of the preorder $R$ is ${\cal
E}$-saturated.

\end{proposition}

\begin{proof}
{\rm (i)} We have $xRx$ because $xR_\iota x$ for $\iota=p(x)$. Now,
let $xRy$ and $yRz$. We obtain $p(x)\leq p(y)$ and $p(y)\leq p(z)$,
hence $p(x)\leq p(z)$. If $p(x)<p(z)$, then $xRz$. Otherwise, there
exists $\iota\in I$ such that $\iota=p(x)=p(y)=p(z)$, $xR_\iota y$,
and $yR_\iota z$. Therefore $xR_\iota z$ and this yields $xRz$.

{\rm (ii)} We have $xRy$ and $yRx$ if and only if there exists
$\iota\in I$ such that $p(x)=p(y)=\iota$, $xR_\iota y$, and
$yR_\iota x$.

{\rm (iii)} This is an immediate consequence of parts {\rm (i)} and
{\rm (ii)}.

{\rm (iv)} We have $P_{R_{p\left(x\right)}}=\emptyset$ and part {\rm
(iii)} yields that $xP_Ry$ is equivalent to $p(x) < p(y)$.

{\rm (v)} If $x'{\cal E}x$ and $y{\cal E}y'$ (that is, $x'\in
A_{p\left(x\right)}$ and $y'\in A_{p\left(y\right)}$) and if
$xP_Ry$, then $x'P_Ry'$ because of part {\rm (iv)}.

\end{proof}

Let $(A_\iota)_{\iota\in I}$ be a family  of preordered sets, whose
index set $I$ is partially ordered. The coproduct $A$ of this
family, endowed with the preorder from Proposition~\ref{5.15.1},
{\rm (i)}, is said to be the \emph{coproduct of the preordered sets
$(A_\iota)$, ${\iota\in I}$}, or the \emph{coproduct of the family
$(A_\iota)_{\iota\in I}$ of preordered sets}.

We note that definition from part {\rm (i)} of the above proposition
is a generalization of the definition from~\cite[Ch. 3, Sec. 1,
Exercise 3, a]{[2]}.

\subsection{Interval topology of coproduct of a family of balloons}

\label{5.20}

Here we use freely the terminology, notation, and results from
subsection~\ref{5.15} as well as from~\cite[Ch. I]{[5]}).

Let $A$ be a preordered set and let $R$ be its preorder. Let us set
$E=I_R$ and $F=P_R$. An \emph{open interval in $A$} is any subset of
$A$ of the form $(x,y)=\{z\in A\mid xFz\hbox{\rm\ and\ }zFy\}$, or
$(\leftarrow, x)=\{z\in A\mid zFx\}$, or $(x,\rightarrow)=\{z\in
A\mid xFz\}$, where $x,y\in A$. A \emph{closed interval in $A$} is
any subset of $A$ of the form $[x,y]=\{z\in A\mid xRz\hbox{\rm\ and\
}zRy\}$, or $(\leftarrow, x]=\{z\in A\mid zRx\}$, or
$[x,\rightarrow)=\{z\in A\mid xRz\}$, where $x,y\in A$. In order to
emphasize that the corresponding interval is in $A$, we write
$(x,y)=(x,y)_A$, etc. The \emph{interval topology} ${\cal T}_0(A)$
on $A$ is the topology generated by the set ${\cal O}_A$ of all open
intervals in $A$, that is, $U\in {\cal T}_0(A)$ ($U$ is \emph{open})
if $U$ is a union of finite intersections of open intervals in $A$.
In particular, the open (respectively, closed) intervals are open
(respectively, closed) sets in the interval topology. When $A$ is a
linearly ordered set, any finite intersection of open intervals is
an open interval, hence in this case the set ${\cal O}_A$ is a base
of the topology of $A$.

We remind that a topological space $A$ is said to be
\emph{separable} if there is a countable subset $D\subset A$ such
that $D\cap U\neq\emptyset$ for any non-empty open subset $U\subset
A$. In order to prove separability, it is enough to verify the
inequality $D\cap U\neq\emptyset$ for any non-empty member $U$ of a
base of the topology of $A$.

\begin{proposition} \label{5.20.1} Under the conditions of
Proposition~\ref{5.15.1}, let the index set $I$ be linearly ordered,
let $(A_\iota)_{\iota\in I}$ be a family of balloons, and let $A$
and $I$ be endowed with their interval topologies.

{\rm (i)} The natural projection $p\colon A\to I$ is a strictly
increasing map which induces a pair of mutually inverse bijections
\begin{equation}
{\cal O}_A\to {\cal O}_I,\hbox{\ } J\mapsto p(J), \label{5.20.5}
\end{equation}
and
\begin{equation}
{\cal O}_I\to {\cal O}_A,\hbox{\ }  K\mapsto p^{-1}(K).
\label{5.20.10}
\end{equation}

{\rm (ii)} The set ${\cal O}_A$ is a base of the topology of $A$.

{\rm (iii)} $p\colon A\to I$ is a continuous and open map.

\end{proposition}

\begin{proof} Let $F=P_R$ be the asymmetric part of the preorder $R$
on the coproduct $A$.

{\rm (i)} According to Proposition~\ref{5.15.1}, {\rm (iv)}, {\rm
(v)}, the relations $xFy$ and $p(x)<p(y)$ are equivalent and hence
any open interval $J$ in $A$ is ${\cal E}$-saturated. In particular,
$p$ is a strictly increasing map. Moreover, $p$ maps any open
interval $J$ in $A$ into an open interval $K$ in $I$ whose
endpoint(s) is (are) the image(s) of the endpoint(s) of $J$. Since
the sets $A_\iota$, $\iota\in I$, are not empty, the natural
projection $p$ is a surjective map and we have $p(J)=K$. On the
other hand, the inverse image $p^{-1}(K)$ of an open interval $K$ in
$I$ is an open interval in $A$ with endpoint(s) chosen to be inverse
image(s) of the endpoint(s) of $K$. Thus, the existence of the
maps~(\ref{5.20.5}) and~(\ref{5.20.10}) is justified. Finally, we
have $p(p^{-1}(K))=K$ and the ${\cal E}$-saturatedness of the open
intervals $J$ in $A$ yields $J=p^{-1}(p(J))$.

{\rm (ii)} It is enough to show that a finite intersection of open
intervals in $A$ is an open interval. For let $J_\lambda\in {\cal
O}_A$, $\lambda=1,\ldots,s$ and let $K_\lambda\in {\cal O}_I$ be
such that $J_\lambda=p^{-1}(K_\lambda)$. Since $I$ is a linearly
ordered set, the intersection $K=\cap_{\lambda=1}^sK_\lambda$ is an
open interval in $I$ and we have
\[
p^{-1}(K)=p^{-1}(\cap_{\lambda=1}^sK_\lambda)=
\cap_{\lambda=1}^sp^{-1}(K_\lambda)=\cap_{\lambda=1}^sJ_\lambda.
\]
Now, part~{\rm (i)} yields the statement.

{\rm (iii)} Apply part~{\rm (i)}.

\end{proof}

\begin{corollary} \label{5.20.15} {\rm (i)} The topology of $A$ is
the inverse image via $p$ of the topology of $I$.

{\rm (ii)} The topological space $A$ is connected if and only if the
topological space $I$ is connected.

{\rm (iii)} The topological space $A$ is separable if and only if
the topological space $I$ is separable and the countable dense
subsets $D\subset A$, $D'\subset I$ can be chosen such that
$p(D)=D'$.

\end{corollary}

\begin{proof} {\rm (i)} Proposition~\ref{5.20.1} implies
immediately that ${\cal T}_0(A)=\{p^{-1}(V)\mid V\in {\cal T}_0(I)$.

{\rm (ii)} According to Proposition~\ref{5.20.1}, {\rm (iii)}, the
surjective map $p\colon A\to I$ is continuous, hence the
connectedness of $A$ implies the connectedness of $I$ (see~\cite[Ch.
I, Sec. 11, n$^o$, Proposition 4]{[5]}. Now, let $I$ be a connected
topological space and let us suppose that $A$ is not connected. In
other words, there exist two open non-empty and disjoint subsets $U$
and $U'$ of $A$, such that $A=U\cup U'$. Proposition~\ref{5.20.1},
{\rm (ii)}, yields that $U=\cup_{\lambda\in L}J_\lambda$ and
$U'=\cup_{\mu\in M}J'_\mu$ for some non-empty families
$(J_\lambda)_{\lambda\in L}$, $(J'_\mu)_{\mu\in M}$ of open
intervals in $A$. Let $(K_\lambda)_{\lambda\in L}$,
$(K'_\mu)_{\mu\in M}$ be the families of open intervals in $I$, such
that $J_\lambda=p^{-1}(K_\lambda)$ and $J'_\mu=p^{-1}(K'_\mu)$. We
have $p(U)=\cup_{\lambda\in L}K_\lambda$, $p(U')=\cup_{\mu\in
M}K'_\mu$, and $I=p(U)\cup p(U')$. Since $I$ is a connected
topological space, the non-empty open sets $p(U)$ and $p(U')$ are
not disjoint, that is, there exist indices $\lambda\in L$ and
$\mu\in M$ with $K_\lambda\cap K'_\mu\neq\emptyset$. We have
$J_\lambda\cap J'_\mu=p^{-1}(K_\lambda\cap K'_\mu)$ and the
surjectivity of $p$ implies $J_\lambda\cap J'_\mu\neq\emptyset$
--- a contradiction with $U\cap U'=\emptyset$. Therefore $A$ is
connected.

{\rm (iii)} Let $A$ be separable and let $D$ be a countable subset
of $A$ which meets every non-empty open interval $J$ in $A$. Then in
accord with Proposition~\ref{5.20.1}, {\rm (i)}, the countable image
$D'=p(D)$ meets any non-empty open interval $K$ in $I$.

Now, let $I$ be a separable topological space and let $D'\subset I$
be a countable subset such that $D'\cap K\neq\emptyset$ for any
non-empty open interval $K$ in $I$. Using the axiom of choice, we
fix an element $d\in p^{-1}(d')$ for any $d'\in D'$. The subset
$D\subset A$ of all $d\in A$ just chosen is countable and
Proposition~\ref{5.20.1}, {\rm (i)}, assures that $D$ meets every
non-empty open interval $J$ in $A$. Thus, $A$ is separable and,
moreover, $D'=p(D)$.

\end{proof}

\section{Characterization of the negative transitivity}

\label{2}

This section is the core of the paper. Here we give a complete
characterization of the sets of preferences $A$ whose preference
relation has negatively transitive asymmetric part. As a consequence
of this characterization we prove the existence of a generalized
continuous utility function on $A$ under the condition that the
topological space $A$ is connected and separable.

\subsection{The necessary condition}

\label{2.1}

\begin{theorem} \label{2.1.1} Let $A$ be a set of preferences
endowed with preference relation $R$ and let its asymmetric part
$F=P_R$ be negatively transitive. Then one has:

{\rm (i)} The relation $E_R$ is symmetric and ${\cal E}=E_F$ is an
equivalence relation on $A$.

{\rm (ii)} ${\cal E}=E\cup E_R$, where $E=I_R$ is the symmetric part
of $R$.

{\rm (iii)} The relation $F$ is ${\cal E}$-saturated.

{\rm (iv)} The relation $E$ is weakly ${\cal E}$-saturated.

{\rm (v)} The relation $R$ is weakly ${\cal E}$-saturated.

Let $A'=A/\!{\cal E}$ be the factor-set and let $R'$ and $\bar{F}$
be the weak factor-relation of $R$ and the factor-relation of $F$
with respect to the equivalence relation ${\cal E}$. Then one has:

{\rm (vi)} $R'=D_{A'}\cup\bar{F}$ and $D_{A'}\cap\bar{F}=\emptyset$.

{\rm (vii)} $R'$ is a reflexive, transitive, antisymmetric, and
complete binary relation on $A'$ with asymmetric part $\bar{F}$ and
the factor-set $A'$ endowed with $R'$ is a linearly ordered set.

{\rm (viii)} The inverse images $A_\iota=p^{-1}(\iota)$, $\iota\in
A'$, of the canonical surjective map $p\colon A\to A'$ are
$E$-saturated and any $A_\iota$ furnished with the equivalence
relation $E_\iota$ induced by $E$ is a balloon.

{\rm (ix)} The set of preferences $A$ is equal to the coproduct
$\coprod_{\iota\in A'}A_\iota$ of the family $(A_\iota)_{\iota\in
A'}$ of balloons.

\end{theorem}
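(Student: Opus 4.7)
The plan is to dispatch parts (i)--(v) by direct appeal to the machinery of Section~\ref{A} and then identify the weak factor-relation $R'$ explicitly in order to obtain (vi)--(vii). By Proposition~\ref{1.2.15}, $F=P_R$ is asymmetric and transitive, and by hypothesis it is negatively transitive, so Proposition~\ref{A.2.65} applies directly and yields that ${\cal E}=E_F$ is an equivalence relation and that $F$ is ${\cal E}$-saturated, settling (i) and (iii) at once. For the decomposition ${\cal E}=E\cup E_R$ in (ii), I would use the disjoint decomposition $R=E\cup F$ coming from Proposition~\ref{A.2.40} to write $U_R=E\cup F\cup F^{-1}$; then direct set manipulation gives $E\subset{\cal E}$ and $E_R=U_R^c\subset(F\cup F^{-1})^c={\cal E}$, while conversely any element of ${\cal E}$ either lies in $U_R$ (and is forced into $E$, since $F$ and $F^{-1}$ are excluded) or in $E_R$.

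For (iv), Lemma~\ref{A.2.1} applies immediately: $E$ is reflexive and contained in the equivalence relation ${\cal E}$, hence weakly ${\cal E}$-saturated. For (v), I unfold the definition of weak ${\cal E}$-saturation and split on whether $yEz$ or $yFz$. The first case feeds into (iv) to produce a witness $t$ with $z{\cal E}t$ and $xEt$, which gives $xRt$ because $E\subset R$; the second case uses the left ${\cal E}$-saturation of $F$ from (iii) to obtain $xFz$ directly, so that the witness $t=z$ works by reflexivity of ${\cal E}$.

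For (vi), the inclusion $D_{A'}\subset R'$ is just reflexivity of $R$, and $\bar{F}\subset R'$ follows from Proposition~\ref{1.4.5} applied to the ${\cal E}$-saturated relation $F$, combined with the definition of $R'$. Conversely, if $\bar{x}R'\bar{y}$ with $\bar{x}\neq\bar{y}$, picking $x'=x$ in the definition yields some $y'\in\bar{y}$ with $xRy'$; the alternative $xEy'$ would force $\bar{x}=\bar{y}$, so $xFy'$ and hence $\bar{x}\bar{F}\bar{y}$ by Proposition~\ref{1.4.5}. Disjointness of $D_{A'}$ and $\bar{F}$ is the irreflexivity of $\bar{F}$, which holds because $xFy$ with $x{\cal E}y$ would contradict $F\cap E_F=\emptyset$. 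For (vii), reflexivity and transitivity of $R'$ come from Proposition~\ref{A.2.5}; completeness holds because pairs outside $D_{A'}$ correspond to pairs outside ${\cal E}=E_F$, hence to $F$-comparable pairs; antisymmetry transfers from the asymmetry of $\bar{F}$ via Proposition~\ref{1.4.5}; and the set computation $I_{R'}=(D_{A'}\cup\bar{F})\cap(D_{A'}\cup\bar{F}^{-1})=D_{A'}$ identifies $P_{R'}=\bar{F}$.

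The main delicate point I anticipate is in (vi): one must resist identifying the weak factor-relation $R'$ with the factor-relation $\bar{R}$ from Proposition~\ref{1.4.5}, because $R$ itself need not be ${\cal E}$-saturated (symmetric-part elements can sit in the same bubble as $E_R$-related ones), so $\bar{R}$ is not even defined. Instead one computes $R'$ from its weak definition, exploiting the key asymmetry in the hypothesis: the \emph{asymmetric part} $F$ alone is ${\cal E}$-saturated, and this is exactly what makes the identification $R'=D_{A'}\cup\bar{F}$ go through.
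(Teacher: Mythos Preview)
Your proposal is correct and follows essentially the same route as the paper's proof: the same lemmas (Proposition~\ref{1.2.15}, Proposition~\ref{A.2.65}, Lemma~\ref{A.2.1}, Proposition~\ref{A.2.5}) are invoked at the same places, and the overall architecture is identical. The minor differences are that the paper obtains (ii) via the identities of Proposition~\ref{1.2.1} rather than by direct inclusion-chasing, handles (v) by the one-line observation that a union of weakly ${\cal E}$-saturated relations is weakly ${\cal E}$-saturated, and in (vii) verifies antisymmetry through condition~(\ref{A.2.10}) and Proposition~\ref{A.2.5}\,(iii) rather than directly from the asymmetry of $\bar{F}$; your arguments for (vi) and (vii) are if anything slightly more transparent, and your closing remark that $R$ itself need not be ${\cal E}$-saturated is exactly the right diagnosis of why the weak factor-relation is needed.
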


\begin{proof} {\rm (i)} The relation $E_R$ is symmetric and
in accord with Proposition~\ref{5.1.65}, ${\cal E}$ is an
equivalence relation on $A$.

{\rm (ii)} We have ${\cal E}=(F\cup F^{-1})^c=F^c\cap
(F^c)^{-1}=(R^c\cup R^{-1})\cap ((R^c)^{-1}\cup R)=I_{R^c}\cup
I_R=E_R\cup I_R$.

{\rm (iii)} Since $F$ is asymmetric and negatively transitive,
Proposition~\ref{5.1.65} yields that $F$ is ${\cal E}$-saturated.

{\rm (iv)} This is a consequence of Lemma~\ref{5.1.1}.

{\rm (v)} Part {\rm (iii)} implies that $F$ is weakly ${\cal
E}$-saturated. Since $R=E\cup F$, we obtain that $R$ is weakly
${\cal E}$-saturated.

{\rm (vi)} Using parts {\rm (ii)}, {\rm (iii)}, and {\rm (iv)}, we
have $R'=E'\cup F'=D_{A'}\cup\bar{F}$, the last equality because of
Proposition~\ref{5.1.5}, {\rm (iv)}. If
$D_{A'}\cap\bar{F}\neq\emptyset$, then there exists $x\in A$ with
$\bar{x}\bar{F}\bar{x}$. In particular, $xFx$ and this contradicts
$E\cap F=\emptyset$. Therefore $D_{A'}\cap\bar{F}=\emptyset$.

{\rm (vii)} The weak factor-relation $R'$ is reflexive and
transitive because of Proposition~\ref{5.1.5}, {\rm (i)}, {\rm
(ii)}. Moreover, part {\rm (v)} together with
Proposition~\ref{5.1.5}, {\rm (iii)}, {\rm (iv)}, imply that $R'$ is
an antisymmetric relation. Now, let $\bar{x}\neq\bar{y}$. Then
$xE_F^cy$ and let us suppose, for example, that $xFy$. Therefore
$\bar{x}\bar{F}\bar{y}$ and this implies $\bar{x}R'\bar{y}$. Thus,
$R'$ is a complete relation. In accord with
Proposition~\ref{5.1.65}, $F$ is transitive and
Proposition~\ref{5.1.5}, {\rm (ii)}, assures that the
factor-relation $F'=\bar{F}$ is transitive, too. If
$\bar{x}\bar{F}\bar{y}$ and $\bar{y}\bar{F}\bar{x}$, then
$\bar{x}R'\bar{y}$ and $\bar{y}R'\bar{x}$, and the antisymmetry of
$R'$ implies $\bar{x}=\bar{y}$ --- a contradiction with part {\rm
(vi)}. Therefore the relation $\bar{F}$ on $A'$ is asymmetric and
the decomposition of $R'$ from part {\rm (vi)} yields
$P_{R'}=\bar{F}$.

{\rm (viii)} It is enough to note that $E\subset {\cal E}$.

{\rm (ix)} The set of preferences $A$ is equal to the coproduct
$\coprod_{\iota\in A'}A_\iota$ of the family $(A_\iota)_{\iota\in
A'}$ of sets. Since $R=E\cup F$ and since the relation $xFy$ is
equivalent to the inequality $p(x)<p(y)$ (part {\rm (vi)}), we have
$xRy$ if and only if ``$p(x)<p(y)$" or ``$p(x)=p(y)$ and
$xE_{p\left(x\right)}y$". In accord with Proposition~\ref{5.15.1},
{\rm (i)}, $A$ is the coproduct of the family $(A_\iota)_{\iota\in
A'}$ of balloons.

\end{proof}

\subsection{The sufficient condition}

\label{2.5}

According to Theorem~\ref{2.1.1}, {\rm (ix)}, any set of preferences
whose preference relation has negatively transitive asymmetric part
can be identified with the coproduct of a family of balloons with
linearly ordered index set. Theorem~\ref{2.5.5} below shows that
starting with a family of balloons indexed with a linearly ordered
set $I$, we can endow the coproduct $A$ of this family with a
preference relation $R$ such that its asymmetric part $F$ is
negatively transitive, the factor-order $R'$ on the factor-set
$A'=A/E_F$ is linear, and the linearly ordered set $A'$ is
isomorphic to $I$.

\begin{theorem}\label{2.5.5} Let $(A_\iota)_{\iota\in I}$ be a
family of balloons, let $(E_\iota)_{\iota\in I}$ be the family of
the corresponding equivalence relations, and let the index set $I$
be linearly ordered. Let ${\cal E}$ be the equivalence relation on
the coproduct $A=\coprod_{\iota\in I}A_\iota$, which corresponds to
the partition $(A_\iota)_{\iota\in I}$, let $p\colon A\to I$,
$x\mapsto\iota$ for $x\in A_\iota$, be the natural projection, and
let $R$ be the preference relation on $A$ from
Proposition~\ref{5.15.1}, {\rm (i)}, defined by the rule
\[
xRy \hbox{\rm\ if\ } "p(x) < p(y)" \hbox{\rm\ or\ } "p(x)=p(y)
\hbox{\rm\ and\ } xE_{p\left(x\right)}y".
\]

 {\rm (i)} The asymmetric part $F=P_R$ of $R$ is negatively
transitive and one has ${\cal E}=E_F$.

{\rm (ii)} The relation $R$ is weakly ${\cal E}$-saturated.

{\rm (iii)} The factor-set $A'=A/{\cal E}$ endowed with the
factor-relation $R'$ is isomorphic to the linearly ordered set $I$.

\end{theorem}

\begin{proof} {\rm (i)} Since
$E_\iota=I_{E_\iota}$ for any $\iota\in I$, using
Proposition~\ref{5.15.1}, {\rm (ii)}, we have
\[
xI_Ry \hbox{\rm\ if and only if\ }"p(x)=p(y) \hbox{\rm\ and\ }
xE_{p\left(x\right)}y"
\]
and this implies
\begin{equation}
xFy \hbox{\rm\ if and only if\ } p(x) < p(y).\label{2.5.10}
\end{equation}
In particular, $E_F=\{(x,y)\in A\mid p(x)=p(y)\}={\cal E}$.

Now, let $xFz$ and let $y\in A$. Then $p(x)<p(z)$ and the linearity
of $I$ yields $p(x)<p(y)$ or $p(y)<p(z)$, that is $xFy$ or $yFz$. In
other words, the asymmetric part $F$ of $R$ is negatively
transitive.

{\rm (ii)} Use Part {\rm (i)} and Theorem~\ref{2.1.1}, {\rm (v)}.

{\rm (iii)} Since $p$ is a surjective map and since ${\cal
E}=\{(x,y)\in A\mid p(x)=p(y)\}$, Proposition~\ref{5.1.5}, {\rm
(v)}, yields an increasing bijection $p'\colon A'\to I$ of linearly
ordered sets, that is, an isomorphism between $A'$ and $I$.

\end{proof}

\subsection{Existence of a continuous generalized utility function}

\label{2.10}

Let $\hbox{\ccc R}$ be the set of real numbers. Let $A$ be a set of
preferences, let $R$ be its preference relation with asymmetric part
$F$, and let $A$ be endowed with its interval topology. By a
\emph{generalized utility function} on $A$ we mean a real function
$u\colon A\to \hbox{\ccc R}$ such that: (a) $u(x) < u(y)$ if and
only if $xFy$; (b) $u(x)=u(y)$ if and only if $x{\cal E}y$, where
${\cal E}=E_F$ is the indifference ``$x$ and $y$ are not
$F$-comparable". The existence of a generalized utility function
yields that ${\cal E}$ is an equivalence relation and that $F$ is
negatively transitive. Conversely, in case $F$ is negatively
transitive, Theorem~\ref{2.1.1}, {\rm (i)}, {\rm (ii)}, assure that
${\cal E}$ is an equivalence relation and, moreover, under the
condition that $u$ exists, (b) is equivalent to: (b$'$) $u(x)=u(y)$
if and only if ``$xRy$ and $yRx$" or ``$x$ and $y$ are not
$R$-comparable". When the preference relation $R$ is complete,
condition (b$'$) has the form $u(x)=u(y)$ if and only if $xRy$ and
$yRx$, that is, $u$ is the ordinary utility function.

In the lemma below we use freely the terminology and notation from
Theorem~\ref{2.1.1} and Theorem~\ref{2.5.5}.

We denote by $U$ the set of the four intervals in $\hbox{\ccc R}$
with endpoints $0$ and $1$.

\begin{lemma}\label{2.10.1} Let $A$ be a set of preferences whose
preference relation $R$ has a negatively transitive asymmetric part
$F$, let $A$ be the coproduct of the corresponding family
$A_\iota)_{\iota\in I}$ of balloons whose index set $I$, after the
identification $I=A/{\cal E}$, is furnished with the linear
factor-order, and let $p\colon A\to I$, $x\mapsto\iota$ for $x\in
A_\iota$, be the natural projection. Let $A$ and $I$ be endowed with
their interval topologies and let $\Lambda\in U$. The following two
respective statements are then equivalent:

{\rm (i)} There exists a surjective generalized (and continuous)
utility function $u\colon A\to \Lambda$.

{\rm (ii)} There exists a strictly increasing surjective (and
continuous) map $u'\colon I\to \Lambda$.

Parts {\rm (i)}, {\rm (ii)} with continuous $u$, $u'$ imply:

{\rm (iii)} There exists a homeomorphism $u'\colon I\to \Lambda$.

If, in addition, $I$ is connected, then parts {\rm (i)}, {\rm (ii)}
with continuous $u$, $u'$ and part {\rm (iii)} are equivalent.

\end{lemma}

\begin{proof}
{\rm (i)} $\Longleftrightarrow$ {\rm (ii)} There exists a bijection
between the maps $u\colon A\to \hbox{\ccc R}$ which are constant on
the members of the factor-set $I=A/{\cal E}$ and the maps $u'\colon
I\to \hbox{\ccc R}$ such that the diagram
\begin{diagram}
A           &         & \\
\dTo_{p} & \rdTo^{u}& \\
I        & \rTo^{u'} & \hbox{\ccc R}\\
\end{diagram}
is commutative. The equivalence~(\ref{2.5.10}) shows that $u$ is
strictly increasing if and only if $u'$ is strictly increasing.
Moreover, since ${\cal E}=\{(x,y)\in A^2\mid u(x)=u(y)\}$,
Proposition~\ref{5.1.5}, {\rm (v)}, implies that the map $u'$ is a
strictly increasing bijection. Proposition~\ref{5.20.1}, {\rm
(iii)}, and the commutativity of the above diagram ($u'=p\circ u$)
assure that $u$ is continuous if and only if $u'$ is continuous.

{\rm (ii)} $\Longrightarrow$ {\rm (iii)} Let $u'\colon I\to \Lambda$
be a strictly increasing surjective continuous map. Then $u'$ is an
isomorphism of posets and maps any open interval $K$ in $I$ onto the
open interval $u'(K)$ in $\Lambda$. Thus, the bijection $u'$ is a
continuous and open map, hence its inverse $u'^{-1}\colon \Lambda\to
I$ is a continuous map, too.

{\rm (ii)} $\Longleftrightarrow$ {\rm (iii)} Let  $u$ and $u'$ be
continuous. Under the condition of connectedness of $I$, this
equivalence may be found in~\cite[Ch. IV, Sec. 2, Exercise 8
a]{[5]}.

\end{proof}

 Let $\hbox{\ccc Q}$ be the set of rational numbers.

\begin{theorem}\label{2.10.5} Let $A$ be a set of preferences whose
preference relation has a negatively transitive asymmetric part. Let
$A$ be connected and has a countable dense subset $D$ with respect
to the interval topology on $A$. Then there exists a continuous
generalized utility function $u$ on $A$ which maps the set $A$ onto
some interval $\Lambda\in U$ and its subset $D$ onto the
intersection $\Lambda\cap\hbox{\ccc Q}$.

\end{theorem}

\begin{proof} Under the conditions of Lemma~\ref{2.10.1},
Corollary~\ref{5.20.15}, {\rm (ii)}, {\rm (iii)}, yields that the
index set $I$ is connected and separable with $D'=p(D)$ as a
countable dense subset. Now, in accord with~\cite[Ch. IV, Sec. 2,
Exercise 11 a]{[5]}, there exist a interval $\Lambda\in U$ and a map
$u'\colon I\to \Lambda$ which is an increasing homeomorphism of $I$
onto $\Lambda$ and maps $D'$ onto the intersection
$\Lambda\cap\hbox{\ccc Q}$. Composing $u'$ with the natural
projection $p\colon A\to I$, we obtain a map $u\colon A\to\Lambda$
which according to Lemma~\ref{2.10.1} is the desired continuous
generalized utility function.

\end{proof}

\begin{remark} \label{2.10.10} {\rm In the case of plane
$A=\hbox{\ccc R}^2$ endowed with the lexicographical order and the
interval topology, the connectedness of the topological space $A$
fails to be true because the connected components of $A$ are exactly
the "vertical lines" $L_a=\{(a,b)\in\hbox{\ccc R}^2\mid b\in
\hbox{\ccc R}\}$, $a\in \hbox{\ccc R}$. Moreover, it is well known
that on the (complete) set of preferences $A$ there is no utility
function --- see, for example,~\cite[Chapter B, Section 4, 4.2,
Example 1]{[32]}. Thus, we can not expect a general result without
$A$ being connected.

}

\end{remark}

\appendix

\section{Appendix}

\subsection{Partially Ordered sets: Miscellaneous Results}

Let $A$ be a set. The set ${\cal PO}(A)$ consisting of all partial
orders $R\subset A^2$ on $A$ with the relation $R\subset R^\prime$
(that is, inclusion of partial orders) is a poset. Any linear order
$R$ on $A$ is a maximal element of the poset ${\cal PO}(A)$ because
if $(x,y)\notin R$, then $(y,x)\in R$.

\begin{lemma}\label{0.7.2} If $R$ is a non-linear partial order
on the set $A$ and if $a,b\in A$ is a pair such that $(a,b)\notin R$
and $(b,a)\notin R$, then there exists a partial order $R'$ which
extends $R$ and contains $(a,b)$.

\end{lemma}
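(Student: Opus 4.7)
The plan is to construct $R'$ essentially as the transitive closure of $R\cup\{(a,b)\}$. Concretely, I would define
\[
R' = R \cup \{(x,y)\in A^2 \mid (x,a)\in R \text{ and } (b,y)\in R\}.
\]
Since $R$ is reflexive, $(a,a),(b,b)\in R$, so the pair $(a,b)$ lies in $R'$; and clearly $R\subset R'$. It then remains to check that $R'$ is reflexive, transitive, and antisymmetric.

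Reflexivity is inherited from $R$. For transitivity, suppose $(x,y)\in R'$ and $(y,z)\in R'$. I would split into four cases according to whether each pair lies in $R$ or in $R'\setminus R$. In the three cases where at least one pair lies in $R$, the transitivity of $R$ immediately produces a chain witnessing $(x,z)\in R'$. The remaining case is when both pairs lie in $R'\setminus R$, so $(x,a),(b,y),(y,a),(b,z)\in R$; but then $R$-transitivity would give $(b,a)\in R$, contradicting the hypothesis on the pair $a,b$. Hence this case is vacuous, and transitivity is established.

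For antisymmetry, assume $(x,y),(y,x)\in R'$ with $x\neq y$. If both lie in $R$, we contradict the antisymmetry of $R$. If exactly one, say $(y,x)$, lies in $R'\setminus R$, then $(y,a),(b,x)\in R$, and combining with $(x,y)\in R$ and $R$-transitivity again yields $(b,a)\in R$, a contradiction. If both lie in $R'\setminus R$, then $(x,a),(b,y),(y,a),(b,x)\in R$, and once more $(b,a)\in R$ follows, contradiction. Thus $x=y$ and $R'$ is antisymmetric.

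The only delicate point is the transitivity check in the ``both new'' case; the rest is routine verification. The mechanism that makes the whole argument work is the incomparability of $a$ and $b$: every attempt to derive a contradictory pair or to produce a cycle forces an $R$-chain from $b$ back to $a$, which is precisely what is ruled out by the hypothesis $(b,a)\notin R$.
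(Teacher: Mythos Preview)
Your construction $R' = R \cup \{(x,y)\mid (x,a)\in R,\ (b,y)\in R\}$ is exactly the paper's $R\cup R_{(a,b)}$, and your case analysis for transitivity and antisymmetry matches the paper's proof essentially step for step. The proposal is correct and follows the same approach.
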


\begin{proof} We set
\[
R_{\left(a,b\right)}=\{(x,y)\in A^2\mid (x,a)\in R\hbox{\rm\ and\ }
(b,y)\in R\},
\]
and $R'=R\cup R_{\left(a,b\right)}$. Since $(a,b)\in
R_{\left(a,b\right)}$ we have $R'\neq R$. The relation $R'$ is
reflexive since $R$ is reflexive.

Now, let $(x,y)\in R'$ and $(y,x)\in R'$. The relations $(x,y)\in
R_{\left(a,b\right)}$ and $(y,x)\in R_{\left(a,b\right)}$ mean
$(x,a)\in R$, $(b,y)\in R$, $(y,a)\in R$, and $(b,x)\in R$, which in
turn implies $(b,a)\in R$
--- a contradiction. If, for example, $(x,y)\in R$ and $(y,x)\in
R_{\left(a,b\right)}$, then $(y,a)\in R$ and $(b,x)\in R$. We have
$(x,a)\in R$ and this yields $(b,a)\in R$ --- again a contradiction.
Thus, the relation $R'$ is antisymmetric.

Let $(x,y)\in R'$ and $(y,z)\in R'$. The case $(x,y)\in
R_{\left(a,b\right)}$ and $(y,z)\in R_{\left(a,b\right)}$ is
impossible because $(b,y)\in R$ and $(y,a)\in R$ imply $(b,a)\in R$:
a contradiction. Now, let $(x,y)\in R$ and $(y,z)\in
R_{\left(a,b\right)}$. Then $(y,a)\in R$, $(b,z)\in R$, $(x,a)\in
R$, and this yields $(x,z)\in R_{\left(a,b\right)}$. In the case
$(x,y)\in R_{\left(a,b\right)}$ and $(y,z)\in R$ we have $(x,a)\in
R$, $(b,y)\in R$, $(b,z)\in R$, hence $(x,z)\in R$.

\end{proof}

We obtain immediately

\begin{corollary}\label{0.7.3} Any non-linear partial order on a
set is not maximal.

\end{corollary}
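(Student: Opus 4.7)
The plan is to reduce the claim directly to Lemma~\ref{0.7.2}. A partial order $R$ on $A$ is called linear exactly when it is complete, i.e.\ $R\cup R^{-1}=A^2$. Hence if $R$ is not linear, there must exist $a,b\in A$ with $(a,b)\notin R$ and $(b,a)\notin R$; this is precisely the hypothesis required by Lemma~\ref{0.7.2}.

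First I would invoke the lemma with this pair $(a,b)$ to obtain a partial order $R'$ on $A$ such that $R\subset R'$ and $(a,b)\in R'$. Since $(a,b)\in R'\setminus R$, the inclusion is strict: $R\subsetneq R'$. Therefore $R$ is not a maximal element of the poset $({\cal PO}(A),\subset)$, which is the content of the corollary.

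All the real work is packaged into Lemma~\ref{0.7.2}, whose delicate step is the construction $R_{(a,b)}=\{(x,y)\mid xRa\text{ and }bRy\}$ and the verification that $R'=R\cup R_{(a,b)}$ remains antisymmetric and transitive. Given that lemma, the corollary presents no further obstacle and is essentially a one-line deduction.
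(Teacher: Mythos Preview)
Your proposal is correct and matches the paper's approach exactly: the paper states that the corollary is obtained immediately from Lemma~\ref{0.7.2}, and you have spelled out precisely that immediate deduction.
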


\begin{theorem}\label{0.9.5} {\rm (E.~Szpilrajn)} Let $A$ be set.
Any partial order on $A$ can be extended to a linear order on $A$.

\end{theorem}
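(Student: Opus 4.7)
The plan is to invoke Zorn's lemma on the sub-poset of $\mathcal{PO}(A)$ consisting of partial orders extending the given one, and then observe that maximal elements must be linear via Corollary~\ref{0.7.3}.

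More precisely, let $R_0$ be the given partial order on $A$ and let $\mathcal{E}(R_0) = \{R \in \mathcal{PO}(A) \mid R_0 \subset R\}$, ordered by inclusion. This set is non-empty since it contains $R_0$. The first technical step is to show that every chain $(R_\lambda)_{\lambda \in L}$ in $\mathcal{E}(R_0)$ admits an upper bound, namely $R_\infty = \bigcup_{\lambda \in L} R_\lambda$. Reflexivity of $R_\infty$ is clear from reflexivity of any single $R_\lambda$. For antisymmetry, if $(x,y),(y,x) \in R_\infty$, choose $\lambda,\mu$ with $(x,y) \in R_\lambda$ and $(y,x) \in R_\mu$; since the family is a chain, one of these is contained in the other, so both pairs lie in a common $R_\nu$, whose antisymmetry forces $x=y$. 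Transitivity is handled by the same chain trick: two pairs involved lie in a common $R_\nu$, which is transitive. So $R_\infty \in \mathcal{PO}(A)$ and obviously $R_0 \subset R_\infty$, so $R_\infty \in \mathcal{E}(R_0)$.

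Zorn's lemma then produces a maximal element $R^\ast \in \mathcal{E}(R_0)$. The final step is to argue that $R^\ast$ is linear. If it were not, Lemma~\ref{0.7.2} would yield a strict extension $R' \supsetneq R^\ast$ with $R' \in \mathcal{PO}(A)$; but $R_0 \subset R^\ast \subset R'$ places $R'$ back in $\mathcal{E}(R_0)$, contradicting maximality of $R^\ast$ there. Equivalently, one can cite Corollary~\ref{0.7.3} directly after noting that maximality within $\mathcal{E}(R_0)$ implies maximality in $\mathcal{PO}(A)$, since any proper partial-order extension of $R^\ast$ still extends $R_0$.

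The only substantive point is the chain argument, and the one place to be slightly careful is to use the linear-chain hypothesis (not merely the directed one) to place competing pairs inside a single $R_\nu$; this is what lets reflexivity, antisymmetry, and transitivity pass to the union. Everything else is formal, and the Aristotelian flavor of the proof is just: ``extend, extend, extend, and take a maximal extension.''
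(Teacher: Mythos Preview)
Your proof is correct and follows essentially the same route as the paper: apply Zorn's lemma to the set $\mathcal{E}(R_0)$ of partial orders extending $R_0$, verify that the union of a chain is an upper bound, and then use Lemma~\ref{0.7.2} (equivalently Corollary~\ref{0.7.3}) to rule out a non-linear maximal element. The only difference is that you spell out the reflexivity, antisymmetry, and transitivity of the chain union, which the paper simply asserts.
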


\begin{proof} Let $R$ be a partial order on $A$ and let
us consider the non-empty set ${\cal E}(R)\subset {\cal PO}(A)$
consisting of all partial orders that extend $R$.  The partial order
in ${\cal PO}(A)$ induces a partial order on ${\cal E}(R)$ and let
$C\subset {\cal E}(R)$ be a chain. The union U=$\cup_{S\in C}S$ is a
partial order that extends $R$, that is, $U\in {\cal E}(R)$, and,
moreover, $U$ is an upper bound of $C$. Therefore ${\cal E}(R)$ is
an inductively ordered poset and Kuratowski-Zorn theorem yields that
there exists a maximal element $M\in {\cal E}(R)$. If the order $M$
is not linear, then Lemma~\ref{0.7.2} produces a contradiction.

\end{proof}

Let $A$ and $A'$ be posets. \emph{Isomorphism of posets} is a
bijection $f\colon A\to A'$ such that the relations $x\leq y$ and
$f(x)\leq f(y)$ are equivalent.

\begin{proposition}\label{0.7.10} Let $A$ be a linearly ordered
set, and let $B$ be a poset. Every strictly monotonic map $f\colon
A\to B$ is injective; if $f$ is strictly increasing, then $f$ is an
isomorphism of $A$ and $f(A)$.

\end{proposition}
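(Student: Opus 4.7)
The plan is to prove both assertions directly, leveraging the linearity of $A$ together with the asymmetry/irreflexivity properties of strict orders on a poset. The injectivity part needs only a one-line trichotomy argument; the isomorphism part requires establishing both implications of the equivalence $x\leq y \iff f(x)\leq f(y)$, where the reverse direction is the genuine content.

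First I would handle injectivity. Given distinct $x,y\in A$, linearity of $A$ yields either $x<y$ or $y<x$. Strict monotonicity (whether strictly increasing or strictly decreasing) then transports this to a strict inequality between $f(x)$ and $f(y)$ in $B$, i.e.\ an element of the asymmetric part of $\leq$. Since that asymmetric part is in particular irreflexive (Proposition~\ref{1.1.20}, {\rm (i)}), we conclude $f(x)\neq f(y)$.

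For the isomorphism claim I would restrict the codomain to $f(A)$, so that the map becomes a surjection by construction and a bijection by the previous paragraph. What remains is the equivalence of $x\leq y$ in $A$ with $f(x)\leq f(y)$ in $f(A)$. The forward direction is immediate: if $x<y$ then strict increasingness gives $f(x)<f(y)$, hence $f(x)\leq f(y)$; and if $x=y$ then $f(x)=f(y)\leq f(y)$.

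The reverse direction is the main point and uses linearity of $A$ (not of $B$, which need not be linear). Assume $f(x)\leq f(y)$ and suppose for contradiction that $x\leq y$ fails; by linearity, $y<x$. Strict increasingness then yields $f(y)<f(x)$, so $f(y)\leq f(x)$ and $f(y)\neq f(x)$. Combined with the hypothesis $f(x)\leq f(y)$, antisymmetry of $\leq$ in $B$ forces $f(x)=f(y)$, a contradiction. Hence $x\leq y$. The only subtle point, and the ``hard part'' in a purely organizational sense, is to remember that the trichotomy is available in the source $A$ rather than in the target, so the contradiction must be driven by antisymmetry in $B$ after transporting the assumed strict inequality forward.
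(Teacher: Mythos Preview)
Your proof is correct and follows essentially the same route as the paper's: trichotomy in $A$ for injectivity, and for the isomorphism claim the contrapositive of the reverse implication, using linearity of $A$ to get $y<x$ and then pushing this forward via strict increasingness to a contradiction. The only cosmetic difference is that the paper reads the contradiction directly from the definition of the asymmetric part (namely $f(x)>f(y)$ already negates $f(x)\leq f(y)$), whereas you route it through antisymmetry; both are valid and equally short.
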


\begin{proof} Indeed, $x\neq y$ implies $x<y$ or $x>y$,
hence $f(x)<f(y)$ or $f(x)>f(y)$, and in all cases $f(x)\neq f(y)$.
Finally, $f(x)\leq f(y)$ implies $x\leq y$: otherwise we would have
$x>y$ and then $f(x)>f(y)$: a contradiction.

\end{proof}

\begin{proposition}\label{0.2.20} Let $A$ and $B$ be posets and
let $f\colon A\to B$ be a bijection. Then $f$ is an isomorphism of
posets if and only if $f$ and its inverse $f^{-1}$ are increasing
maps, and under this condition $f$ and $f^{-1}$ are strictly
increasing maps.

\end{proposition}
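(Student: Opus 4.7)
The plan is to unpack the definition of an isomorphism of posets (recalled just before the proposition) into the conjunction of two implications, namely $x\leq y \Rightarrow f(x)\leq f(y)$ and $f(x)\leq f(y) \Rightarrow x\leq y$, and show that these two implications correspond precisely to $f$ and $f^{-1}$ being increasing maps.

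For the forward direction, I would first note that if $f$ is an isomorphism, then by definition $x\leq y$ implies $f(x)\leq f(y)$, so $f$ is increasing. For $f^{-1}$, given $u,v\in B$ with $u\leq v$, I would set $x=f^{-1}(u)$, $y=f^{-1}(v)$; then $f(x)=u\leq v=f(y)$, and the reverse implication in the definition of isomorphism yields $x\leq y$, i.e., $f^{-1}(u)\leq f^{-1}(v)$. Thus $f^{-1}$ is also increasing.

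For the reverse direction, assume both $f$ and $f^{-1}$ are increasing. The implication $x\leq y \Rightarrow f(x)\leq f(y)$ is just the fact that $f$ is increasing. Conversely, if $f(x)\leq f(y)$, applying the increasing map $f^{-1}$ gives $x=f^{-1}(f(x))\leq f^{-1}(f(y))=y$; this uses only that $f^{-1}$ is increasing together with $f^{-1}\circ f = \mathrm{id}_A$, which holds since $f$ is a bijection. Hence the two conditions $x\leq y$ and $f(x)\leq f(y)$ are equivalent, and $f$ is an isomorphism of posets.

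Finally, under the assumption that $f$ (equivalently $f^{-1}$) is an isomorphism, I would check that $f$ is strictly increasing: if $x<y$, then $x\leq y$ and $x\neq y$; since $f$ is increasing we get $f(x)\leq f(y)$, and since $f$ is a bijection (hence injective) we have $f(x)\neq f(y)$, whence $f(x)<f(y)$. The same argument applied to $f^{-1}$ shows $f^{-1}$ is strictly increasing. No step here is substantively hard; the only point that requires a moment's attention is making sure that the biconditional in the definition of isomorphism is cleanly separated into the two one-sided monotonicity statements for $f$ and $f^{-1}$.
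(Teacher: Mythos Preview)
Your proof is correct and follows essentially the same approach as the paper's: both split the biconditional defining an isomorphism into the two monotonicity statements for $f$ and $f^{-1}$, and both derive strict monotonicity from monotonicity plus injectivity. Your write-up is simply more explicit in the ``only if'' direction, which the paper dismisses as immediate.
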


\begin{proof} The ``only if" part is immediate. Now, let
$f$ and its inverse $f^{-1}$ be increasing maps. The relation $x\leq
y$ in $A$ implies $f(x)\leq f(y)$ in $B$. The relation $x'\leq y'$
in $B$ implies $f^{-1}(x')\leq f^{-1}(y')$ in $A$, and it is enough
to set $x'=f(x)$ and $y'=f(y)$. Now, let $f$ be an isomorphism of
posets. Then $x<y$ implies $f(x)\leq f(y)$ and if $f(x)=f(y)$, then
$x=y$: a contradiction. Therefore $f(x) < f(y)$.

\end{proof}

\begin{lemma}\label{0.2.191} Let $A$ be a linearly preordered set.
Then the intersection of any finite family of open intervals in $A$
is an open interval in $A$.

\end{lemma}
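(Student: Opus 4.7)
The plan is to induct on the size of the family, so the main task is to show the intersection of two open intervals is again an open interval. Write $R$ for the preorder on $A$ and $F=P_R$ for its asymmetric part. The workhorse is the following mixed-transitivity property, both parts of which are straightforward consequences of the transitivity of $R$: for $a,c\in A$ with $aRc$, one has $cFz\Rightarrow aFz$ and $zFa\Rightarrow zFc$. Indeed, $cFz$ gives $cRz$, so $aRz$ by transitivity, and $zRa$ combined with $aRc$ would yield $zRc$, contradicting $zR^c c$; the second implication is proved symmetrically using the hypothesis $aR^c z$.

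It is convenient to parametrize any open interval of $A$ by a pair $(l,u)$ with $l\in A\cup\{\leftarrow\}$ and $u\in A\cup\{\rightarrow\}$, under the conventions that ``$lFz$'' is vacuously true when $l=\leftarrow$ and ``$zFu$'' is vacuously true when $u=\rightarrow$. To intersect $(l_1,u_1)_A$ and $(l_2,u_2)_A$, I invoke completeness of $R$ (together with the conventions that $\leftarrow$ is $R$-below and $\rightarrow$ is $R$-above every element of $A$) to pick $l$ to be the $R$-larger of $l_1,l_2$ and $u$ to be the $R$-smaller of $u_1,u_2$. The mixed transitivity then gives $l_1Fz\wedge l_2Fz\Leftrightarrow lFz$ and $zFu_1\wedge zFu_2\Leftrightarrow zFu$, so
\[
(l_1,u_1)_A\cap(l_2,u_2)_A=(l,u)_A,
\]
which is an open interval of $A$. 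Induction on the cardinality of the family completes the argument.

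The only nontrivial input is completeness of the preorder, which is precisely what lets any two endpoints be compared and a single ``larger'' lower endpoint and ``smaller'' upper endpoint be selected; without it the conjunction $l_1Fz\wedge l_2Fz$ generally fails to collapse to a single condition of the form $lFz$ with $l\in A$. The verification could equally be carried out by enumerating the nine combinations of interval types $(a,b)_A$, $(\leftarrow,b)_A$, $(a,\rightarrow)_A$, but the uniform parametrization above avoids that case analysis.
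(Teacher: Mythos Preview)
Your argument is correct and follows essentially the same approach as the paper: both proofs select the $R$-largest lower endpoint and $R$-smallest upper endpoint (using completeness) and identify the intersection with the resulting open interval. The paper does this in one stroke for the whole finite family and simply asserts the equality $\cap_{\iota}(a_\iota,b_\iota)=(a,b)$, whereas you reduce to the two-interval case by induction and supply the explicit verification via your mixed-transitivity lemma; your version is thus a more detailed rendering of the same idea rather than a different route.
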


\begin{proof} Let $((a_\iota,b_\iota))_{\iota\in I}$ be a finite
family of open intervals. Let $a$ be the greatest element of the
finite family $(a_\iota)_{\iota\in I}$ and let $b$ be the least
element of the finite family $(b_\iota)_{\iota\in I}$. Note that
$a=\leftarrow$ if $a_\iota=\leftarrow$ for all $\iota\in I$ and
$b=\rightarrow$ if $b_\iota=\rightarrow$ for all $\iota\in I$. Then
we have
\[
\cap_{\iota\in I}(a_\iota,b_\iota)=(a,b).
\]

\end{proof}

\begin{theorem}{\rm (G.~Cantor)}\label{0.9.25} Let $A$ and
$B$ be countable linearly ordered sets that have least elements and
greatest elements.

{\rm (i)} If $B$ is without gaps, then there exists a strictly
increasing map $f\colon A\to B$.

{\rm (ii)} If $A$ and $B$ are without gaps, then the map $f$ from
part {\rm (i)} is an isomorphism of posets.

\end{theorem}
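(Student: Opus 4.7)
The plan is to carry out Cantor's back-and-forth recursion, treating the endpoints by hand in the base case. Fix enumerations $A = (a_n)_{n\in\hbox{\ccc N}}$ and $B = (b_n)_{n\in\hbox{\ccc N}}$, and denote $\alpha_0 = \min A$, $\alpha_1 = \max A$, $\beta_0 = \min B$, $\beta_1 = \max B$. Throughout, ``without gaps'' is read as density: for any $u < v$ in the given loset, there is an element strictly between $u$ and $v$.

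For part {\rm (i)}, I would construct a sequence of finite strictly increasing partial maps $f_n$ from $A$ to $B$, whose domains $D_n$ always contain $\alpha_0$ and $\alpha_1$. At stage $0$ I set $D_0 = \{\alpha_0,\alpha_1\}$, $f_0(\alpha_0) = \beta_0$, $f_0(\alpha_1) = \beta_1$. At stage $n+1$, if $a_n \in D_n$, put $f_{n+1} = f_n$; otherwise I locate the unique consecutive pair $u < v$ in $D_n$ with $u < a_n < v$ (which exists because $\alpha_0 \leq a_n \leq \alpha_1$), pick some $b \in B$ with $f_n(u) < b < f_n(v)$ by absence of gaps in $B$, and set $f_{n+1}(a_n) = b$. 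Strict monotonicity is preserved, since $u$ and $v$ being consecutive in $D_n$ means no prior image value lies in the open interval $(f_n(u),f_n(v))_B$. The union $f = \bigcup_n f_n$ is then a strictly increasing map $A \to B$.

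For part {\rm (ii)}, I would interleave ``back'' steps into the recursion: at even stages handle the next $a_n$ by a forth step as above, and at odd stages handle the next $b_n$ symmetrically. In a back step, if $b_n$ is already in the image, do nothing; otherwise find the unique consecutive values $f(u) < b_n < f(v)$ in the current image (here the presence of $\beta_0$ and $\beta_1$ in the image from stage $0$ is used to guarantee such flanking values exist), invoke the absence of gaps in $A$ to choose $a \in A$ with $u < a < v$, and set $f(a) = b_n$. Since $u$ and $v$ are consecutive in the current domain, $a$ is automatically fresh, and the extension remains strictly increasing. After all stages, every $a_n$ lies in the domain and every $b_n$ lies in the image, so $f$ is a strictly increasing bijection; by Proposition~\ref{0.7.10} (or Proposition~\ref{0.2.20}) it is an isomorphism of posets.

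The main obstacle will be verifying that the inductive invariants --- strict monotonicity of each $f_n$, retention of the endpoints in $D_n$ and in the image of $f_n$, and consistency of each extension with previously assigned values --- are simultaneously preserved by both types of step. The endpoint handling in the base case is precisely what guarantees that every new $a_n$ or $b_n$ is flanked by earlier elements of the current domain or image, so that the ``between consecutive neighbors'' argument has something to latch onto; and the two density hypotheses are used in exactly the two directions, one for the forth step and one for the back step.
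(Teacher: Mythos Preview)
Your proposal is correct. For part {\rm (i)} it is essentially the paper's construction: both start from the endpoints and extend one $a_n$ at a time, using density of $B$ to slot the new image between the images of the consecutive neighbors in the current finite domain. The one difference is that the paper, already in part {\rm (i)}, insists on choosing the image with \emph{minimal index} in the enumeration of $B$ (the rule $f_n(a_n)=b_s$ with $s$ least), whereas you pick an arbitrary witness.

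For part {\rm (ii)} the two arguments diverge genuinely. You run the classical back-and-forth, alternating forth and back steps; surjectivity is then immediate from the recursion, and density of $A$ is invoked only in the back steps. The paper instead keeps the forth-only construction from part {\rm (i)} unchanged and proves, by a separate induction on the enumeration of $B$, that the minimal-index rule forces every $b_p$ to eventually enter the image. Their approach has the virtue that a single construction serves both parts and literally establishes that ``the map $f$ from part {\rm (i)}'' is an isomorphism, as the statement is phrased; the price is a somewhat delicate surjectivity argument. Your approach is more symmetric and makes bijectivity transparent, but the map you build in {\rm (ii)} is not the same as the one from your {\rm (i)} (since your {\rm (i)} lacks the minimal-index discipline), so you are reading the statement as an existence claim rather than as a claim about the specific map from {\rm (i)}.
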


\begin{proof} {\rm (i)} We set $A=\{a_1,a_2,\ldots, \}$ and
$B=\{b_1,b_2,\ldots,\}$ and without any loss of generality we can
suppose that $a_1$, $b_1$ are the least and $a_2$, $b_2$ are the
greatest elements of $A$ and $B$. Further, we set
$A_n=\{a_1,a_2,\ldots, a_n\}$, $n\geq 2$. We define inductively a
sequence of strictly increasing maps $(f_n\colon A_n\to B)_{n\geq
2}$ such that for any $n$ the map $f_n$ is an extension of
$f_{n-1}$. First we define a strictly increasing map $f_2\colon
A_2\to B$ by $f_2(a_1)=b_1$ and $f_2(a_2)=b_2$. Given $n\geq 3$,
under the assumption that $f_{n-1}$ is a strictly increasing map, we
define $f_n$ via the rule
\begin{equation}
f_n(a_n)=b_s,\label{0.9.30}
\end{equation}
where $s\geq 3$ is the minimal index such that $f_n$ is a strictly
increasing map. It is enough to show that there exists an index
$s\geq 3$ that satisfies~(\ref{0.9.30}) and has the latter property.
Let $a_i$ be the immediate predecessor and $a_j$ be the immediate
successor of $a_n$ in the finite poset $A_n$. We have
$f_{n-1}(a_i)<f_{n-1}(a_j)$ and since the open interval
$I=(f_{n-1}(a_i),f_{n-1}(a_j))$ is not empty, each $b_s\in I$ works.
The sequence $(f_n)_{n\geq 2}$, in turn, defines a strictly
increasing map $f\colon A\to B$ by the formula $f(a_n)=f_m(a_n)$ for
some $m\geq n$.

{\rm (ii)} It is enough to prove that for any integer $p\geq 2$ we
have $b_k\in {\rm Im} (f)$ for all $k<p$. This statement is true for
$p=2$. Now, let $p\geq 3$, let us suppose that the statement is true
for all integers $< p$, and let us choose $n\geq 2$ such that
$b_1,\ldots, b_{p-1}\in f_n(A_n)$. Let us suppose that $b_p\notin
f_n(A_n)$, let $b_\iota$ be the immediate predecessor, $b_\kappa$ be
the immediate successor of $b_p$ in the finite poset $f_n(A_n)$, and
let $f_n(a_i)=b_\iota$, $f_n(a_j)=b_\kappa$. We have $A_n\cap
(a_i,a_j)=\emptyset$ and choose $m$ to be the minimal index such
that $a_m\in (a_i,a_j)$. Then $m>n$, $A_m\cap (a_i,a_j)=\{a_m\}$,
and, in particular, $a_i$ is the immediate predecessor and $a_j$ is
the immediate successor of $a_m$ in the finite poset $A_m$.
Moreover, $p$ is the minimal index with $b_p\in
(f_n(a_i),f_n(a_j))$. In accord with the definition of the map $f$
from part {\rm (i)}, we have $f(a_m)=f_m(a_m)=b_p$. Thus, our
statement is true for $p+1$ and the principle of mathematical
induction yields it for all $p\geq 2$.

\end{proof}

\begin{corollary}\label{0.9.35} For any countable
linearly ordered set $A$ there exists a strictly increasing map
$f\colon A\to [0,1]_{\hbox{\cccc Q}}$. If, in addition, $A$ is
without gaps, then:

{\rm (i)} The map $f$ establishes an isomorphism between $A$ and the
interval $[0,1]_{\hbox{\cccc Q}}$ if and only if $A$ has a least
element and a greatest element.

{\rm (ii)} The map $f$ establishes an isomorphism between $A$ and
the interval $[0,1)_{\hbox{\cccc Q}}$ if and only if $A$ has least
element but has no greatest element.

{\rm (iii)} The map $f$ establishes an isomorphism between $A$ and
the interval $(0,1]_{\hbox{\cccc Q}}$ if and only if $A$ has a
greatest element but has no least element.

{\rm (iv)} The map $f$ establishes an isomorphism between $A$ and
$(0,1)_{\hbox{\cccc Q}}$ if and only if $A$ has no least element and
no greatest element.

\end{corollary}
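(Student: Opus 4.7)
The plan is to reduce the corollary to Cantor's Theorem~\ref{0.9.25} by adjoining formal endpoints to $A$ when necessary. I form a countable linearly ordered set $A^*$ from $A$ as follows: if $A$ has no least element, adjoin a formal element $-\infty$ declared smaller than every element of $A$; if $A$ has no greatest element, adjoin a formal element $+\infty$ declared larger than every element of $A$. Then $A^*$ is countable, linearly ordered, and possesses both a least and a greatest element.

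For the existence statement, take $B=[0,1]_{\hbox{\cccc Q}}$; it is countable, linearly ordered, has least element $0$ and greatest element $1$, and is without gaps because $\hbox{\ccc Q}$ is order-dense in itself. Theorem~\ref{0.9.25}, {\rm (i)}, applied to $A^*$ and $B$ supplies a strictly increasing map $f^*\colon A^*\to B$, and its restriction $f=f^*|_A\colon A\to [0,1]_{\hbox{\cccc Q}}$ is the required strictly increasing map.

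Now suppose, in addition, that $A$ is without gaps. I first verify that the extension $A^*$ also lacks gaps: when $-\infty$ is adjoined it is precisely because $A$ has no least element, so for any $x\in A$ there exists $y\in A$ with $y<x$, and $-\infty<y<x$ shows that the pair $(-\infty,x)$ is not a gap; the no-gap property inside $A$ is untouched, and the argument for $+\infty$ is symmetric. Hence $A^*$ meets the hypotheses of Theorem~\ref{0.9.25}, {\rm (ii)}, so $f^*\colon A^*\to [0,1]_{\hbox{\cccc Q}}$ is an isomorphism of linearly ordered sets. Being an order-isomorphism, $f^*$ sends the least element of $A^*$ to $0$ and the greatest to $1$. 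Restricting $f^*$ to $A\subset A^*$ removes from the image exactly the images of the adjoined endpoints, yielding the four ``if'' implications: no adjunction gives {\rm (i)}; adjoining only $+\infty$ gives {\rm (ii)}; adjoining only $-\infty$ gives {\rm (iii)}; adjoining both gives {\rm (iv)}.

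The ``only if'' directions in {\rm (i)}--{\rm (iv)} are immediate: an order-isomorphism of linearly ordered sets preserves the presence and absence of a least and of a greatest element, and the four target intervals $[0,1]_{\hbox{\cccc Q}}$, $[0,1)_{\hbox{\cccc Q}}$, $(0,1]_{\hbox{\cccc Q}}$, $(0,1)_{\hbox{\cccc Q}}$ are distinguished by exactly which of these extrema are present. The sole point that requires care is the verification that adjoining an endpoint preserves the no-gaps property, which is where the hypothesis ``$A$ has no least (respectively greatest) element'' enters essentially; everything else is a routine application of Theorem~\ref{0.9.25}.
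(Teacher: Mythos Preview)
Your proof is correct and follows essentially the same route as the paper: adjoin formal endpoints where missing, invoke Cantor's Theorem~\ref{0.9.25} to obtain an isomorphism onto $[0,1]_{\hbox{\cccc Q}}$, and then restrict to $A$. The paper treats the four cases one at a time (adjoining nothing, $\gamma$, $\lambda$, or both) while you handle them uniformly via a single $A^*$; you are also a bit more careful than the paper in two places --- you form $A^*$ before invoking Theorem~\ref{0.9.25}\,(i) for the bare existence statement (the theorem as stated requires least and greatest elements on the domain side), and you explicitly check that adjoining an endpoint preserves the no-gap property, a step the paper leaves implicit.
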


\begin{proof} The existence of a strictly
increasing map $f\colon A\to [0,1]_{\hbox{\cccc Q}}$ is ensured by
Theorem~\ref{0.9.25}, {\rm (i)}. The necessity part of the
equivalences in {\rm (i)} -- {\rm (iv)} is immediate.

{\rm (i)} Note that the interval $[0,1]_{\hbox{\cccc Q}}$ has least
element $0$, greatest element $1$, has no gaps, and then use
Theorem~\ref{0.9.25}, {\rm (ii)}.

{\rm (ii)} We adjoint a greatest element $\gamma$ to $A$, thus
obtaining $A'=A\oplus\{\gamma\}$, use part {\rm (i)} for $A'$, and
then restrict the corresponding isomorphism on $A$.

{\rm (iii)} We adjoint a least element $\lambda$ to $A$, thus
obtaining $A'=\{\lambda\}\oplus A$, use part {\rm (i)} for $A'$, and
then restrict the corresponding isomorphism on $A$.

{\rm (iv)} We adjoint a greatest element $\gamma$ and a least
element $\lambda$ to $A$, thus obtaining $A''=\{\lambda\}\oplus
A\oplus\{\gamma\}$, use part {\rm (i)} for $A''$, and then restrict
the corresponding isomorphism on $A$.

\end{proof}

Using Corollary~\ref{0.9.35}, {\rm (iv)}, twice we obtain

\begin{corollary}\label{0.9.40} Every countable subset of
the open interval $(0,1)_{\hbox{\cccc R}}$, which is dense in this
interval, is isomorphic to $\hbox{\ccc Q}$.

\end{corollary}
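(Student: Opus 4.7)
The plan is to apply Corollary~\ref{0.9.35}, {\rm (iv)} once to the given dense countable subset $D\subset (0,1)_{\hbox{\ccc R}}$ and once to $\hbox{\ccc Q}$ itself, in each case producing an isomorphism with the interval $(0,1)_{\hbox{\ccc Q}}$, and then to compose these isomorphisms.

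First I would verify that $D$, considered as a linearly ordered subset of $\hbox{\ccc R}$, fulfils all four hypotheses of Corollary~\ref{0.9.35}, {\rm (iv)}: countability is given; the order inherited from $\hbox{\ccc R}$ is linear; $D$ has neither a least nor a greatest element, because density of $D$ in $(0,1)_{\hbox{\ccc R}}$ means that for any $d\in D$ both of the non-empty real open intervals $(0,d)_{\hbox{\ccc R}}$ and $(d,1)_{\hbox{\ccc R}}$ must contain a point of $D$; and $D$ has no gaps, because given $a<b$ in $D$ the non-empty real open interval $(a,b)_{\hbox{\ccc R}}\subset (0,1)_{\hbox{\ccc R}}$ again must meet $D$. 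Hence Corollary~\ref{0.9.35}, {\rm (iv)}, yields an order-isomorphism $D\cong (0,1)_{\hbox{\ccc Q}}$.

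Next I would apply the same corollary to $\hbox{\ccc Q}$ itself: it is countable, linearly ordered, has no least or greatest element, and is gap-free since the rationals are dense in $\hbox{\ccc R}$ (so between any two rationals lies another rational). This produces a second order-isomorphism $\hbox{\ccc Q}\cong (0,1)_{\hbox{\ccc Q}}$. Composing the first isomorphism with the inverse of the second gives the desired isomorphism $D\cong\hbox{\ccc Q}$.

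The work is entirely bookkeeping; the one mildly delicate point is checking the gap-freeness of $D$, since a gap in $D$ would be a pair $a<b$ in $D$ with $D\cap(a,b)_{\hbox{\ccc R}}=\emptyset$, and ruling this out is exactly what topological density of $D$ in $(0,1)_{\hbox{\ccc R}}$ gives us (applied to the open interval $(a,b)_{\hbox{\ccc R}}$). Once this is recorded the two applications of Corollary~\ref{0.9.35}, {\rm (iv)}, and the composition are routine, so I do not anticipate any real obstacle.
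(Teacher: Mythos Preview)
Your proposal is correct and follows exactly the paper's own approach: the paper's proof consists of the single line ``Using Corollary~\ref{0.9.35}, {\rm (iv)}, twice we obtain'', and you have simply spelled out the routine verification of the hypotheses for $D$ and for $\hbox{\ccc Q}$ that this line presupposes.
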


Combining Theorem~\ref{0.9.5} and Corollary~\ref{0.9.35} we obtain

\begin{corollary}\label{0.9.45} Given a countable partially
ordered set $A$, there exists a strictly increasing map of $A$ into
$\hbox{\ccc Q}$.

\end{corollary}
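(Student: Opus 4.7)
The plan is to combine the two results the statement explicitly invokes: first extend the partial order on $A$ to a linear order by Szpilrajn's Theorem~\ref{0.9.5}, and then embed the resulting countable loset into $\hbox{\ccc Q}$ by the existence clause of Corollary~\ref{0.9.35}. The composition will be a map $f\colon A\to\hbox{\ccc Q}$, and one only has to verify that being strictly increasing with respect to the linear extension implies being strictly increasing with respect to the original partial order.

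In more detail, I would let $R$ denote the given partial order on the countable set $A$. Applying Theorem~\ref{0.9.5}, I would fix a linear order $R^{*}$ on $A$ with $R\subset R^{*}$. The underlying set is unchanged, so $(A,R^{*})$ is still a countable linearly ordered set. The opening statement of Corollary~\ref{0.9.35} then supplies a strictly increasing map $f\colon (A,R^{*})\to [0,1]_{\hbox{\cccc Q}}\subset\hbox{\ccc Q}$.

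It remains to check that $f$ is strictly increasing with respect to the original partial order as well. Suppose $xP_{R}y$, that is, $(x,y)\in R$ and $(y,x)\notin R$. The inclusion $R\subset R^{*}$ gives $(x,y)\in R^{*}$, while $x=y$ is impossible because then reflexivity of $R$ would contradict $(y,x)\notin R$; antisymmetry of $R^{*}$ therefore forces $xP_{R^{*}}y$. Since $f$ is strictly increasing for $R^{*}$, we obtain $f(x)<f(y)$ in $\hbox{\ccc Q}$, which is exactly the conclusion sought.

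There is essentially no serious obstacle, since both the linear extension and the embedding of a countable loset are already available in the paper; the only point worth spelling out is the brief verification above that strict increase transfers from the extension $R^{*}$ back to the coarser partial order $R$, which relies on $R\subset R^{*}$ together with the antisymmetry of $R^{*}$.
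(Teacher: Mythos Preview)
Your proposal is correct and follows exactly the route the paper indicates: the paper's ``proof'' consists solely of the phrase ``Combining Theorem~\ref{0.9.5} and Corollary~\ref{0.9.35} we obtain'', and you have simply spelled out this combination together with the easy check that strict increase with respect to the linear extension $R^{*}$ implies strict increase with respect to the original partial order $R$.
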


\begin{corollary}\label{0.9.50} If $A$ is a countable linearly
ordered set $A$ without gaps, then there exists a bounded from above
subset $B$ of $A$, which has no least upper bound in $A$.

\end{corollary}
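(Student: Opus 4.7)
The plan is to reduce to a canonical model via Corollary~\ref{0.9.35} and then produce a Dedekind-cut style subset. By that corollary, a countable linearly ordered set $A$ without gaps is order-isomorphic to one of the four intervals $[0,1]_{\hbox{\cccc Q}}$, $[0,1)_{\hbox{\cccc Q}}$, $(0,1]_{\hbox{\cccc Q}}$, $(0,1)_{\hbox{\cccc Q}}$, according to which of the endpoints $A$ possesses. The property ``there exists a bounded from above subset with no least upper bound'' is clearly preserved by order-isomorphisms, so it suffices to construct such a subset in each of these four rational intervals $I_{\hbox{\cccc Q}}$ and then pull it back to $A$ through the isomorphism provided by the corollary.

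In every one of the four cases I would fix an irrational number $\alpha \in (0,1)_{\hbox{\cccc R}}$, say $\alpha = \sqrt{2}/2$, and set
\[
B = \{q \in I_{\hbox{\cccc Q}} \mid q < \alpha\}.
\]
Since $\alpha < 1$, the density of $\hbox{\ccc Q}$ in $\hbox{\ccc R}$ supplies a rational $r$ with $\alpha < r < 1$; in each of the four cases such $r$ lies in $I_{\hbox{\cccc Q}}$ and is an upper bound of $B$, so $B$ is bounded from above. Suppose now, for contradiction, that $r \in I_{\hbox{\cccc Q}}$ is a least upper bound of $B$. Because $\alpha$ is irrational, $r \neq \alpha$. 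If $r < \alpha$, density gives $q \in \hbox{\ccc Q}$ with $r < q < \alpha$, so $q \in B$ and $r$ fails to be an upper bound. If $r > \alpha$, density gives $r' \in \hbox{\ccc Q}$ with $\alpha < r' < r$; then $r' \in I_{\hbox{\cccc Q}}$ is still an upper bound of $B$, strictly smaller than $r$, contradicting minimality.

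I do not anticipate any substantive obstacle: the argument is essentially the standard fact that $\hbox{\ccc Q}$ is not Dedekind-complete, transferred to $A$ along the classification of countable linearly ordered sets without gaps. The only bookkeeping is to verify in each of the four cases that rationals in $(\alpha,1)$ actually lie in $I_{\hbox{\cccc Q}}$, which is immediate because all four rational intervals contain every element of $(0,1) \cap \hbox{\ccc Q}$.
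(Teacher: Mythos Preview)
Your proof is correct and follows essentially the same route as the paper: reduce via Corollary~\ref{0.9.35} to one of the four rational intervals with endpoints $0$ and $1$, cut at the irrational $\sqrt{2}/2$, and pull the resulting set back along the isomorphism. The paper's proof is terser --- it simply sets $J=I\cap(0,\frac{\sqrt{2}}{2})_{\hbox{\cccc R}}$ and asserts that $J$ has no least upper bound in $I$ --- whereas you spell out the two-case density argument, but the underlying idea is identical.
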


\begin{proof}  Let $f\colon A\to [0,1]_{\hbox{\cccc Q}}$ be a
strictly increasing map with image $I$ which coincides with one of
the four intervals in the rational line $\hbox{\ccc Q}$ with
endpoints $0$ and $1$. The existence of $f$ is assured by
Corollary~\ref{0.9.35}. Let $J=I\cap
(0,\frac{\sqrt{2}}{2})_{\hbox{\cccc R}}$ and let $B=f^{-1}(J)$.
Since the subset $J$ of $I$ has no least upper bound in $I$, the
subset $B$ of $A$ has no least upper bound in $A$.

\end{proof}

\subsection{Interval Topology: Connectedness}

\label{1.8}

Let $A$ be a preordered set with preorder $R$ and let $A$ be
furnished with its interval topology ${\cal T}_0(A)$. A closed
interval $[x,y]$ with $xRy$ is said to be a \emph{gap} in $A$ if
$[x,y]=\{x,y\}$, that is, the open interval $(x,y)$ is empty.

We use the same terminology and notation for a partially ordered set
$A$.

\begin{examples} \label{1.8.400} {\rm (1) The natural topology of
the real line $\hbox{\ccc R}$ coincides with its interval topology
${\cal T}_0(\hbox{\ccc R})$. The (countable) set of all open
intervals with rational endpoints is a base of this topology and the
trace of this set on $\hbox{\ccc Q}\subset \hbox{\ccc R}$ is by
definition the natural base of the interval top[ology ${\cal
T}_0(\hbox{\ccc Q})$ on the rational line $\hbox{\ccc Q}$. In
particular, the topology of $\hbox{\ccc Q}$ as a subspace of the
real line $\hbox{\ccc R}$ coincides with its interval topology
${\cal T}_0(\hbox{\ccc Q})$.

(2) The natural topology of any interval $I$ in the real line
$\hbox{\ccc R}$, that is, the topology on $I$, considered as a
subspace of $\hbox{\ccc R}$, coincides with its interval topology
${\cal T}_0(I)$ because the trace of any open interval in
$\hbox{\ccc R}$ on $I$ is an open interval in $I$.

(3) The natural topology of any interval $J$ in the rational line
$\hbox{\ccc Q}$, that is, the topology on $J$, considered as a
subspace of $\hbox{\ccc Q}$, coincides with its interval topology
${\cal T}_0(J)$ because the trace of any open interval in
$\hbox{\ccc Q}$ on $J$ is an open interval in $J$. In accord with
Example (1), any open interval $J$ in the rational line $\hbox{\ccc
Q}$ with its natural topology is a subspace of the real line
$\hbox{\ccc R}$, too.

(4) Given a natural number $n\in\hbox{\ccc N}$, we set
$A=[0,1]_{\hbox{\cccc Q}}$,
\[
F_n=\{(\frac{1}{i},\frac{1}{j})\in\hbox{\ccc Q}\times \hbox{\ccc
Q}\mid i,j\in\hbox{\ccc N}, 1\leq j < i\leq
n\}\cup\{(0,\frac{1}{i})\mid i\in\hbox{\ccc N}, 1\leq i\leq n\}.
\]
The set $F_n$ is an asymmetric and transitive binary relation on $A$
and $R_n=D_A\cup F_n$ is a reflexive and transitive binary relation
on $A$, that is, a partial order on $A$, with asymmetric part $F_n$.
We denote by $A_n$ the set $A$ endowed with the partial order $R_n$.
Every member of $F_n$, that is, an ordered pair of the form
$(\frac{1}{i},\frac{1}{j})$, $i,j\in\hbox{\ccc N}$, $1\leq j < i\leq
n$, or $(0,\frac{1}{i})$, $i\in\hbox{\ccc N}$, $1\leq i\leq n$,
defines an open interval in the interval topology ${\cal T}_0(A_n)$:
\[
(\frac{1}{i},\frac{1}{j})_{A_n}=\{a\in A\mid (\frac{1}{i},a)\in
F_n\hbox{\rm\ and\ } (a,\frac{1}{j})\in F_n\},
\]
and
\[
(0,\frac{1}{i})_{A_n}=\{a\in A\mid (0,a)\in F_n\hbox{\rm\ and\ }
(a,\frac{1}{i})\in F_n\},
\]
respectively. Every element of the family $\hbox{\ccc F}_n$ of open
intervals thus obtained has the form
\[
(\frac{1}{i},\frac{1}{j})_{A_n}=\{\frac{1}{k}\mid \frac{1}{i}<
\frac{1}{k}<\frac{1}{j}\hbox{\rm\ and\ }k\in\hbox{\ccc N}, 1\leq
k\leq n\},
\]
and
\[
(0,\frac{1}{i})_{A_n}=\{\frac{1}{k}\mid
\frac{1}{k}<\frac{1}{i}\hbox{\rm\ and\ }k\in\hbox{\ccc N}, 1\leq
k\leq n\}.
\]
In particular, the closed intervals $[0,\frac{1}{n}]_{A_n}$ and
$[\frac{1}{j+1},\frac{1}{j}]_{A_n}$, $1\leq j\leq n-1$, are all gaps
in the partially ordered set $A_n$.

Since the intersection of any finite number of members of the family
$\hbox{\ccc F}_n$ is again a member of $\hbox{\ccc F}_n$, we obtain
that the finite family $\hbox{\ccc F}_n$ is a base of the interval
topology ${\cal T}_0(A_n)$. On the other hand, any union of members
of $\hbox{\ccc F}_n$ equals a finite union of disjoint members of
$\hbox{\ccc F}_n$. Therefore the interval topology ${\cal T}_0(A_n)$
is a finite set consisting of finite unions of disjoint members of
$\hbox{\ccc F}_n$. In particular, all open sets different from $A_n$
are finite and all closed sets different from the empty set
$\emptyset$ are infinite (and countable), which implies that $A_n$
is a connected topological space.

}

\end{examples}

\begin{proposition}\label{1.8.4} Let $A$ and $A'$ be partially
ordered sets endowed with interval topology.

{\rm (i)} If $f\colon A\to A'$ is an isomorphism of posets, then $f$
is a homeomorphism.

{\rm (ii)} Let $A$ be a linearly ordered set and $f\colon A\to A'$
be a strictly increasing map with image $C\subset A'$. If the
topology of $C$ induced from $A'$ coincides with its interval
topology, then $f$ is a homeomorphism of $A$ onto the chain $C$
considered as a subspace of $A'$.

\end{proposition}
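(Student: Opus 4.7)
My plan is to deduce both parts from the observation that a strictly order-preserving bijection between posets carries open intervals to open intervals, and that open intervals generate the interval topology.

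For part (i), since $f$ is an isomorphism of posets, $x<y$ in $A$ is equivalent to $f(x)<f(y)$ in $A'$; hence for every $x,y\in A$ the map $f$ carries the bounded open interval $(x,y)_A$ bijectively onto $(f(x),f(y))_{A'}$, and similarly for the unbounded intervals $(\leftarrow,x)_A$ and $(x,\rightarrow)_A$. Thus $f$ induces a bijection ${\cal O}_A\to{\cal O}_{A'}$, and since $f^{-1}$ is also a poset isomorphism the same holds with the roles reversed. Since ${\cal T}_0(A)$ and ${\cal T}_0(A')$ are generated as topologies by ${\cal O}_A$ and ${\cal O}_{A'}$ respectively (that is, every open set is a union of finite intersections of open intervals), $f$ sends open sets to open sets and pulls them back to open sets. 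Hence $f$ and $f^{-1}$ are continuous.

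For part (ii), the first step is to reduce to part (i) applied to $f$ viewed as a map $A\to C$, where $C$ carries the order inherited from $A'$. By Proposition~\ref{0.7.10}, the strictly increasing map $f$ from the linearly ordered set $A$ to $A'$ is injective, so $f\colon A\to C$ is a bijection, and moreover it is an isomorphism of posets: $x<y$ in $A$ gives $f(x)<f(y)$ in $C$, and conversely if $f(x)<f(y)$ then the trichotomy in $A$ forces $x<y$ (the alternatives $x=y$ or $y<x$ would contradict $f(x)<f(y)$). Therefore part (i) applies to $f\colon A\to C$ and shows it is a homeomorphism when $C$ is given its \emph{own} interval topology. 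The hypothesis is precisely that this interval topology coincides with the topology induced on $C$ from $A'$, so $f$ is a homeomorphism of $A$ onto $C$ viewed as a subspace of $A'$.

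There is no real obstacle here; the only point that requires attention is the verification in part (ii) that $f\colon A\to C$ is a poset isomorphism (not merely a strictly increasing bijection), which uses the linearity of $A$ in an essential way to obtain the reverse implication $f(x)<f(y)\Rightarrow x<y$. Everything else is a routine unwinding of the definition of the interval topology together with part (i).
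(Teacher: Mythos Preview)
Your proof is correct and follows essentially the same route as the paper's. Both arguments establish part~(i) by observing that a poset isomorphism (and its inverse, via Proposition~\ref{0.2.20}) carries open intervals bijectively to open intervals, hence respects the generating sets of the two interval topologies; and both reduce part~(ii) to part~(i) by invoking Proposition~\ref{0.7.10} to see that $f\colon A\to C$ is a poset isomorphism, then using the hypothesis that the subspace and interval topologies on $C$ agree.
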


\begin{proof} {\rm (i)} In accord with Proposition~\ref{0.2.20},
$f$ and its inverse $f^{-1}$ are strictly increasing maps. Therefore
\[ f((x,y))=(f(x),f(y)),\hbox{\rm\  and\ }
f^{-1}((x',y'))=(f^{-1}(x'),f^{-1}(y'))
\]
for any open interval $(x,y)\subset A$ and any open interval
$(x',y')\subset A'$. Thus, the map $f$ (respectively, $f^{-1}$) maps
the sub(base) of the topological space $A$ (respectively, $A'$) onto
the (sub)base of the topological space $A'$ (respectively, $A$) and
therefore $f$ is a homeomorphism.

{\rm (ii)} According to Proposition~\ref{0.7.10}, $f\colon A\to C$
is an isomorphism of posets. If $U'\subset A'$ is open, then the
trace $U'\cap C$ is open in $C$ and hence $f^{-1}(U')=f^{-1}(U'\cap
C)$ is an open set in $A$ because of part {\rm (i)}.

\end{proof}

The map
\[
f\colon \hbox{\ccc R}\to (-1,1)_{\hbox{\cccc R}} ,\hbox{\
}x\mapsto\frac{x}{1+|x|},
\]
is a strictly increasing bijection and the map
\[
g\colon (-1,1)_{\hbox{\cccc R}}\to (0,1)_{\hbox{\cccc R}} ,\hbox{\
}x\mapsto 2\frac{x+1}{x+3},
\]
is a strictly increasing bijection. Thus, the composition $h=g\circ
f$,
\[
h\colon \hbox{\ccc R}\to (0,1)_{\hbox{\cccc R}} ,\hbox{\
}x\mapsto\frac{x+1+|x|}{x+3+3|x|},
\]
is a strictly increasing bijection and in accord with
Proposition~\ref{0.7.10}, $h$ is an isomorphism of posets. The
restriction
\[
h_{\mid\hbox{\cccc Q}}\colon \hbox{\ccc Q}\to (0,1)_{\hbox{\cccc Q}}
,\hbox{\ }x\mapsto\frac{x+1+|x|}{x+3+3|x|},
\]
is a strictly increasing bijection and the same argument yields that
it is an isomorphism of posets.

\begin{proposition} \label{1.8.390} Both bijections $h$ and
$h_{\mid\hbox{\cccc Q}}$ are homeomorphisms of the corresponding
sets endowed with the interval topology.

\end{proposition}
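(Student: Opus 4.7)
The plan is to reduce the statement directly to Proposition~\ref{1.8.4}, {\rm (i)}, since the bulk of the work is already done in the paragraphs immediately preceding the proposition. There it is shown that $h=g\circ f$ is a strictly increasing bijection from $\hbox{\ccc R}$ onto $(0,1)_{\hbox{\cccc R}}$ and that its restriction $h_{\mid\hbox{\cccc Q}}$ is a strictly increasing bijection from $\hbox{\ccc Q}$ onto $(0,1)_{\hbox{\cccc Q}}$; moreover, in each case Proposition~\ref{0.7.10} has already been invoked to conclude that $h$ and $h_{\mid\hbox{\cccc Q}}$ are isomorphisms of posets. So the remaining task is purely topological.

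First I would note that the four relevant ordered sets --- $\hbox{\ccc R}$, $\hbox{\ccc Q}$, $(0,1)_{\hbox{\cccc R}}$, $(0,1)_{\hbox{\cccc Q}}$ --- are all linearly ordered, so in each the interval topology is well-defined and, by Lemma~\ref{0.2.191}, the open intervals form a base. Consequently the codomain of $h$ (respectively $h_{\mid\hbox{\cccc Q}}$) carries an interval topology in the sense used by Proposition~\ref{1.8.4}. Since the two maps are isomorphisms of posets between such sets, a single application of Proposition~\ref{1.8.4}, {\rm (i)}, gives that each of them is a homeomorphism when both source and target are equipped with their interval topologies.

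Finally, I would briefly remark that these are \emph{also} homeomorphisms when one views $(0,1)_{\hbox{\cccc R}}$ and $(0,1)_{\hbox{\cccc Q}}$ as subspaces of $\hbox{\ccc R}$ and $\hbox{\ccc Q}$, respectively, because by Examples~\ref{1.8.400}, {\rm (2)} and {\rm (3)}, the subspace topology on an interval in $\hbox{\ccc R}$ or $\hbox{\ccc Q}$ agrees with the interval topology of the interval. There is really no obstacle here: the only minor point is confirming that the interval topology on the restricted codomain coincides with the natural subspace topology inherited from the real or rational line, which is exactly the content of those examples.
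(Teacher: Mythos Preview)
Your proposal is correct and follows exactly the same approach as the paper: the paper's proof consists of the single sentence ``We apply Proposition~\ref{1.8.4},'' and you have simply unpacked why that proposition applies, using the poset isomorphisms established in the preceding paragraphs. Your additional remark about the subspace topology via Examples~\ref{1.8.400} is a harmless elaboration not present in the paper but entirely consistent with it.
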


\begin{proof} We apply Proposition~\ref{1.8.4}.

\end{proof}

\begin{remarks}\label{1.8.3} {\rm (1) We note that the base of
the interval topology on the underlying set $A$ is invariant if we
replace the poset $A$ with its dual $A^{op}$. Therefore the dual
poset structure produces the same interval topology ${\cal T}_0(A)$.

(2) If the subset $B\subset A$ is endowed with the induced linear
order, then the corresponding interval topology ${\cal T}_0(B)$ is,
in general, weaker than the topology of $B$ considered as a subspace
of $A$.}

\end{remarks}

\begin{proposition}\label{1.8.5} Let $A$ be a linearly
ordered set endowed with the interval topology and let $B\subset A$,
$x,y\in A$, and $x < y$. If $A$ is a set without gaps and
$(x,y)\subset B$, then $x\in \bar{B}$ and $y\in \bar{B}$, where
$\bar{B}$ is the closure of $B$ in $A$.

\end{proposition}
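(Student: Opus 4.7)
The plan is to show directly that every basic open neighborhood of $x$ (and, dually, of $y$) meets $B$. Since $A$ is a loset endowed with its interval topology, Lemma~\ref{0.2.191} tells us that the open intervals of $A$ form a base of this topology. Consequently, it suffices to prove that if $J$ is any open interval containing $x$, then $J\cap B\neq\emptyset$, and analogously for $y$.

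To handle $x\in\bar B$, I would fix such a $J$ and consider the intersection $J\cap (x,y)$. By Lemma~\ref{0.2.191} (applied in the loset $A$), the intersection of two open intervals is again an open interval. A short case analysis on the shape of $J$ (bounded $(a,b)$, lower-unbounded $(\leftarrow, b)$, or upper-unbounded $(a,\rightarrow)$) shows that in each case this intersection has the form $(x,c)_A$, where $c$ is $y$, or the upper endpoint of $J$, whichever is smaller (so in particular $x<c$). Here one uses only that $x\in J$ and $x<y$.

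The decisive step is then to invoke the hypothesis that $A$ has no gaps: since $x<c$, the open interval $(x,c)_A$ is non-empty, i.e.\ there exists $z$ with $x<z<c$. By construction any such $z$ lies simultaneously in $J$ and in $(x,y)\subset B$, so $J\cap B\neq\emptyset$ and hence $x\in\bar B$. The symmetric argument, applied to neighborhoods of $y$ and the intersection with $(x,y)$ on the left side (giving $(c',y)_A$ with $c'<y$ non-empty by the no-gaps assumption), yields $y\in\bar B$; alternatively one can appeal to Remark~\ref{1.8.3} (2) and dualize the order.

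The main obstacle is not conceptual but bookkeeping: one must verify uniformly across the three shapes of open intervals that the intersection with $(x,y)$ is again an open interval with $x$ as its lower endpoint and a strictly larger upper endpoint, so that the no-gaps hypothesis can be applied. Lemma~\ref{0.2.191} carries the weight of this step, after which the argument reduces to the single observation $(x,c)_A\neq\emptyset$ for any $x<c$ in a loset without gaps.
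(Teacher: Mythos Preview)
Your proposal is correct and follows essentially the same route as the paper's proof: given a basic open interval $J$ containing $x$, intersect with $(x,y)$ to obtain $(x,c)$ with $x<c$ (the paper writes $m=\min\{y,b\}$ for your $c$), and use the no-gaps hypothesis to produce a point of $B\cap J$. The paper is terser, restricting without comment to bounded intervals $I=(a,b)$, whereas you treat the three interval shapes explicitly and cite Lemma~\ref{0.2.191}; but the idea is identical.
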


\begin{proof}  Let $I$ be an open interval that contains $x$
(respectively, $y$). It is enough to consider intervals of the form
$I=(a,b)$ with $a < x < b$ (respectively, $a < y < b$). We set
$m=\min \{y,b\}$ (respectively, $M=\max \{x,a\}$). Since $A$ is a
set without gaps, the open interval $(x,m)$ (respectively,$(M,y)$)
is not empty and $(x,m)\subset B\cap I$ (respectively, $(M,y)\subset
B\cap I$). In particular, $B\cap I\neq\emptyset$ and hence $x\in
\bar{B}$ (respectively, $y\in \bar{B}$).

\end{proof}

\begin{corollary}\label{1.8.10} If $z=\inf B$ or $z=\sup B$,
then $z\in \bar{B}$.

\end{corollary}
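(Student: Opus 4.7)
The plan is to prove the statement for $z = \inf B$ and deduce the $\sup$-case by duality (Remarks~\ref{1.8.3}, (2)). If $z \in B$, then $z \in \bar{B}$ trivially, so assume $z \notin B$. Since $A$ is linearly ordered, Lemma~\ref{0.2.191} ensures that the open intervals form a base of the interval topology ${\cal T}_0(A)$, so it suffices to verify $I \cap B \neq \emptyset$ for every open interval $I$ containing $z$.

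I would then split into the three possible forms of such $I$. If $I = (a,b)_A$ with $a < z < b$, then $b > z = \inf B$, so $b$ is not a lower bound of $B$, and there exists $b' \in B$ with $b' < b$; on the other hand $b' \geq \inf B = z$, and since $z \notin B$ in fact $b' > z > a$, whence $b' \in I \cap B$. If $I = (\leftarrow,b)_A$ with $z < b$, the same argument produces a $b' \in B$ with $z < b' < b$, so $b' \in I \cap B$. If $I = (a,\rightarrow)_A$ with $a < z$, then every element of $B$ exceeds $a$, so any $b' \in B$ lies in $I \cap B$ (and $B$ is non-empty since $\inf B$ exists).

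There is no serious obstacle here: the argument is a direct unfolding of the definition of infimum, together with the observation that the open intervals generate the topology. The one thing to watch is that, despite its placement as a corollary, this statement does not literally specialize Proposition~\ref{1.8.5} (whose hypothesis $(x,y)\subset B$ need not hold at an infimum of an arbitrary set, and which moreover requires $A$ to be without gaps); rather, the two results share the same flavour of argument, and the corollary is in fact valid for an arbitrary linearly ordered set $A$, with or without gaps.
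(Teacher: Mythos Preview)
Your proof is correct and follows essentially the same approach as the paper's: take an open interval containing $z$ and use the definition of infimum (respectively supremum) to produce an element of $B$ inside it. Your version is in fact more careful than the paper's, which only writes out the bounded case $I=(a,b)$, whereas you also treat the unbounded intervals explicitly; your closing observation that the result does not actually rest on Proposition~\ref{1.8.5} (and in particular needs no ``no gaps'' hypothesis) is accurate and matches what the paper's own proof tacitly does.
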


\begin{proof}  Let $z=\inf B$ (respectively, $z=\sup B$) and
let $I=(a,b)$ be an open interval that contains $z$. Then there
exists $x\in B$ with $z < x < b$ (respectively, $a < x < z$) and, in
particular, $I\cap B\neq\emptyset$. Thus, $z\in \bar{B}$.

\end{proof}

\begin{proposition}\label{1.8.16} Let $A$ be a non-empty linearly
ordered set equipped with the interval topology. The topological
space $A$ is compact if and only if any subset of $A$ has a least
upper bound and a greatest lower bound.

\end{proposition}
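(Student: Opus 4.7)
Assume $A$ is compact and suppose, for contradiction, that some subset $B\subseteq A$ has no least upper bound (the case of a missing greatest lower bound is dual by Remarks~\ref{1.8.3}~(2)). Let $U_B$ denote the set of upper bounds of $B$; then either $U_B=\emptyset$ or $U_B$ has no least element. In both cases the plan is to cover $A$ by the open intervals $(\leftarrow,b)_A$ with $b\in B$ together with $(u,\rightarrow)_A$ with $u\in U_B$. Checking that this is indeed a cover uses that $B$ has no maximum (otherwise that maximum would be a supremum) and that $U_B$ has no minimum. A finite subcover then yields a contradiction: if $b^*$ is the largest chosen element of $B$ and $u^*$ the smallest chosen upper bound (when present), then $b^*\in(\leftarrow,b_i)_A$ for some chosen $b_i$ forces $b^*<b_i\leq b^*$, while $u^*\in(\leftarrow,b_i)_A$ forces $u^*<b_i\leq u^*$.

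\textbf{The backward direction.} Assume every subset of $A$ has a supremum and an infimum. In particular $A$ has a minimum $m$ and a maximum $M$, so $A=[m,M]$. Given an open cover ${\cal U}$ of $A$, define
\[
S=\{x\in A\mid [m,x]\hbox{\rm\ is covered by finitely many members of\ }{\cal U}\},
\]
which is non-empty since $m\in S$, and let $s=\sup S$, which exists by hypothesis. The plan is to show first that $s\in S$ and then that $s=M$, which immediately produces the desired finite subcover of $A=[m,M]$.

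For $s\in S$: pick $U_0\in {\cal U}$ with $s\in U_0$. By Lemma~\ref{0.2.191} the open intervals form a base of ${\cal T}_0(A)$, so $U_0$ contains an open interval $I\ni s$. If $s>m$ and $I$ has a left endpoint $a$, then $a<s=\sup S$, so some $x\in S$ lies in $(a,s]_A$ and a finite cover of $[m,x]$ together with $U_0$ covers $[m,s]$; if $I$ has no left endpoint or $s=m$, the claim is immediate. For $s=M$: suppose $s<M$ and set $t=\inf\{a\in A\mid a>s\}$, which exists by hypothesis. If $t>s$, then $(s,t)_A=\emptyset$, so $[m,t]=[m,s]\cup\{t\}$ is finitely covered by the previous step plus any member of ${\cal U}$ containing $t$, giving $t\in S$ and contradicting $t>\sup S$. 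If $t=s$, then the definition of $t$ forces any open interval $I\ni s$ contained in $U_0$ to meet $(s,\rightarrow)_A$ in some $y$, whence $[m,y]$ is finitely covered as before, again contradicting $y>\sup S$.

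The main obstacle in both directions is the possibility of a gap: in the forward step one must guarantee that the proposed family of intervals actually covers $A$, which is exactly where the combination ``$B$ has no sup" and ``$B$ has no max" is used, and in the backward step the subcase $t>s$ captures the gap configuration that would defeat any naive ``pick a point strictly to the right of $s$" argument. Once these two issues are isolated, the remainder is pure bookkeeping.
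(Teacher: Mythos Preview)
Your argument is correct but follows a genuinely different route from the paper's. The paper works throughout in Bourbaki's filter language: for the forward direction it takes a non-empty $B\subset A$, forms the section filter on $B$ with base $\{[x,\rightarrow)_B : x\in B\}$, extends it to a filter on $A$, extracts a cluster point $a$ by compactness, and then verifies directly that $a=\sup_A B$ (duality and the observation $\sup_A\emptyset=\min A$, $\inf_A\emptyset=\max A$ finish the job); for the converse it shows that every filter $\mathcal{F}$ on $A$ has the cluster point $a=\sup_A\{\inf_A C : C\in\mathcal{F}\}$, splitting into the trivial-ultrafilter case and the generic case. You instead use the open-cover formulation of compactness on both sides: a missing supremum is converted into an explicit open cover without finite subcover, and the converse is the classical ``creeping'' argument (define $S$, take $s=\sup S$, show $s\in S$ and then $s=M$) familiar from Heine--Borel on the real line. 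Your approach is more elementary and entirely self-contained, needing no filter machinery; the paper's approach has the virtue that in the forward direction it \emph{produces} $\sup_A B$ constructively as a cluster point rather than arguing by contradiction, and it meshes with the Bourbaki-flavoured exposition of the surrounding appendix.
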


\begin{proof} Let $A$ be a compact space and let $B$ be a non-empty
subset of $A$ endowed with the induced linear order. Since $B$ is
filtered to the right, we can form the section filter of $B$, that
is, the filter on $B$ of base
\[
{\cal B}=\{B_x\mid B_x=[x,\rightarrow)_B,\hbox{\ }x\in B\}
\]
consisting of all closed right-unbounded intervals in $B$. Let
${\cal G}$ be the filter generated by ${\cal B}$ when ${\cal B}$ is
considered as a filter base on $A$. Note that the filter ${\cal G}$
is finer than the filter on $A$, generated by the filter base
\[
{\cal A}=\{A_x\mid A_x=[x,\rightarrow)_A,\hbox{\ }x\in B\}.
\]
Since $A$ is compact, the filter ${\cal G}$ has a cluster point
$a\in A$. Therefore $a$ is a cluster point of the filter base ${\cal
A}$ which consists of closed subsets of $A$; in particular $a\in
A_x$ for all $x\in B$, that is, $a$ is an upper bound of the set
$B$. If $b$ is an upper bound of $B$ and $b<a$, then the open
neighborhood $(b,\rightarrow)$ of the point $a$ does not contains
elements of $B$, which is a contradiction. Hence $a\leq b$ for any
upper bound $b$ of $B$, that is, $a=\sup_AB$. The topological space
$A$ is still compact if we change the structure of the linearly
ordered set on $A$ with its dual $A^{op}$. Then the latter statement
means that any non-empty subset of $A^{op}$ has a least upper bound,
that is, any non-empty subset of $A$ has a greatest lower bound. In
particular, the whole set $A$ has a least element $m$ and a greatest
element $M$. If $B=\emptyset$, then $\sup_AB=m$ and $\inf_AB=M$.

Conversely, let us suppose that any non-empty subset of $A$ has a
least upper bound and a greatest lower bound and let ${\cal F}$ be a
filter on $A$. Let $L$ be the set of all greatest lower bounds
$\inf_AC$ of members $C$ of the filter ${\cal F}$ and let
$a=\sup_AL$. We will prove that $a$ is a cluster point of ${\cal
F}$. Let us suppose that there exists $C\in {\cal F}$ such that
$\sup_AC < a$. Then there exists $C'\in {\cal F}$ with $\sup_AC <
\inf_AC'$, hence
\[
\sup_AC\cap C'\leq\sup_AC < \inf_AC'\leq\inf_AC\cap C',
\]
which is a contradiction because $C\cap C'\in {\cal F}$ and, in
particular, $C\cap C'\neq\emptyset$. Thus, for any $C\in {\cal F}$
we have $a\in [\inf_AC,\sup_AC]$. If $\inf_AC_0=\sup_AC_0$ for some
$C_0\in {\cal F}$, then $C_0=\{x\}$, $x\in A$, and the filter ${\cal
F}$ is the trivial ultrafilter $U(x)$. Since $x\in C$ for all $C\in
{\cal F}$, we have $\inf_AC\leq x$ for all $C\in {\cal F}$.
Therefore $a=\sup_AL\leq x$. On the other hand, $\{x\}\in {\cal F}$,
so $x=\inf_A\{x\}\leq a$, hence $x=a$. Since the ultrafilter ${\cal
F}=U(a)$ converges to $a$, the point $a$ is a cluster point of
${\cal F}$. Now, let us consider the case when $\inf_AC <\sup_AC$
for all $C\in {\cal F}$. Let us suppose the existence of an element
$C\in {\cal F}$ and an open interval $I$ that contains $a$, such
that $C\cap I=\emptyset$. If $I=(a',\rightarrow)$ (respectively,
$I=(\leftarrow,a'')$) then $\sup_AC\leq a' < a$ (respectively, $a <
a''\leq\inf_AC$), which in both cases is a contradiction. Now,
suppose that $I=(a',a'')$ where $a' < a''$. Then $C=C'\cup C''$,
where $C'=C\cap (\leftarrow,a']$ and $C''=C\cap [a'',\rightarrow)$.
Since $a' < a$, there exists $D\in {\cal F}$ such that $a' < \inf
D$. Now, the inequalities $\inf_AD\leq a < a''$ yield $C\cap
D=C''\cap D$ and, in particular, $\inf_A(C\cap D)\geq a'' > a$. On
the other hand, $C\cap D\in {\cal F}$ implies $\inf_A(C\cap D) \leq
a$ which is a contradiction. Thus, $a$ is a cluster point of the
filter ${\cal F}$.

\end{proof}

\begin{proposition}\label{1.8.20} Let $A$ be a non-empty linearly
ordered set endowed with the interval topology. If every closed
interval $[x,y]$, $x,y\in A$, $x < y$, is a connected subset of $A$,
then the topological space $A$ is connected.

\end{proposition}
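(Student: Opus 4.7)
The plan is to argue by contradiction: assume $A$ is disconnected, exhibit two points $u<v$ lying in the two pieces of a separation, and show that the subspace $[u,v]$ inherits a separation, contradicting the hypothesis that every closed interval is connected.

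More precisely, suppose $A = U \cup V$ where $U$ and $V$ are non-empty, disjoint, open subsets of $A$ in the interval topology. Choose $u \in U$ and $v \in V$. Since $A$ is linearly ordered and $U \cap V = \emptyset$ forces $u \neq v$, the elements $u$ and $v$ are comparable, and after possibly swapping the roles of $U$ and $V$ we may assume $u < v$. Then form the closed interval $[u,v]_A$ and consider the two traces $U' = U \cap [u,v]_A$ and $V' = V \cap [u,v]_A$. By definition of the subspace topology, $U'$ and $V'$ are open in $[u,v]_A$; they are disjoint because $U$ and $V$ are; they are non-empty because $u \in U'$ and $v \in V'$; and their union equals $[u,v]_A$ because $[u,v]_A \subset A = U \cup V$. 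Hence $[u,v]_A$ is disconnected as a subspace of $A$, contradicting the hypothesis.

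There is essentially no obstacle here apart from a point of hygiene: the hypothesis ``every closed interval $[x,y]$ is a connected subset of $A$'' refers to connectedness of $[x,y]$ in the subspace topology induced from $A$, not in its own interval topology (which, as Remarks~\ref{1.8.3}(3) notes, can in general be strictly coarser). Since the proof works directly with the subspace topology, no identification between these two topologies is needed, and the argument above is complete.
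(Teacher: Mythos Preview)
Your proof is correct and follows essentially the same approach as the paper: assume a separation $A=X\cup Y$, pick $x\in X$, $y\in Y$ with $x<y$, and observe that the traces $X\cap[x,y]$ and $Y\cap[x,y]$ disconnect $[x,y]$ in its subspace topology. Your added remark distinguishing the subspace topology from the intrinsic interval topology on $[x,y]$ is a nice point of hygiene that the paper leaves implicit.
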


\begin{proof} If $A=X\cup Y$ where $X$ and $Y$ are non-empty
disjoint open subsets of $A$ and if $x\in X$, $y\in Y$ with $x < y$,
then the closed interval $I=[x,y]$ is not connected. Indeed,
$I=(I\cap X)\cup (I\cap Y)$, where the traces $I\cap X$, $I\cap Y$
are non-empty, disjoint, and open (with respect to the induced
topology) sets of $I$.

\end{proof}

\begin{proposition}\label{1.8.25} Let $A$ be a non-empty linearly
ordered set endowed with the interval topology and let $B$ be a
non-empty subset of $A$, bounded from above. Let $U$ be the set of
all upper bounds of $B$ and $L$ be the set of all lower bounds of
$U$. Then one has:

{\rm (i)} $L$ and $U$ are non-empty subsets of $A$ with $A=L\cup U$.

{\rm (ii)} The following three statements are equivalent:

{\rm (a)} $z\in L\cap U$;

{\rm (b)} $z$ is the greatest element of $L$;

{\rm (c)} $z=\sup B$.

{\rm (iii)} If the intersection $L\cap U$ is empty, then $L$ and $U$
are open sets and $A$ is not connected.

\end{proposition}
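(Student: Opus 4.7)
The plan is to handle the three parts in order, reducing each to an elementary consequence of linearity of the order.

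For part (i), I would first note that $U\neq\emptyset$ by the hypothesis that $B$ is bounded from above, and that $B\subset L$ because each $b\in B$ satisfies $b\leq u$ for every $u\in U$; hence $L\neq\emptyset$ since $B\neq\emptyset$. To show $A=L\cup U$, pick any $a\in A$. If $a\notin U$ then $a$ is not an upper bound of $B$, so there exists $b\in B$ with $a<b$; for every $u\in U$ we then have $a<b\leq u$, so $a\in L$.

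For part (ii), I would prove the cyclic chain (a)$\Rightarrow$(b)$\Rightarrow$(c)$\Rightarrow$(a). If $z\in L\cap U$ and $l\in L$, then $l\leq z$ since $z\in U$ and $l$ is a lower bound of $U$, giving (b). If $z$ is the greatest element of $L$, then $z\in L$; if $z$ failed to be an upper bound of $B$ there would be $b\in B$ with $z<b$, but $b\in B\subset L$ contradicts maximality of $z$ in $L$, so $z\in U$; combined with $z\in L$, this gives $z=\sup B$, proving (c). Finally, $z=\sup B$ means $z\in U$ by definition and $z\leq u$ for every $u\in U$, i.e.\ $z\in L$, so $z\in L\cap U$.

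For part (iii), assume $L\cap U=\emptyset$. I would show $L$ is open: given $l\in L$, since $l\notin U$ there is $b\in B$ with $l<b$, and the open interval $(\leftarrow,b)$ contains $l$; moreover any $a\in(\leftarrow,b)$ has $a<b\in B$, so $a\notin U$ and hence $a\in L$ by part (i), yielding $(\leftarrow,b)\subset L$. Symmetrically, I would show $U$ is open: given $u\in U$, since $u\notin L$ there exists $u'\in U$ with $u'<u$, the open interval $(u',\rightarrow)$ contains $u$, and any $a\in(u',\rightarrow)$ satisfies $a>u'\geq b$ for all $b\in B$, so $a\in U$. Thus $L$ and $U$ are non-empty, disjoint, open subsets whose union is $A$, so $A$ is not connected.

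The argument is almost entirely formal; the only place where the topological hypothesis is used is in part (iii), and the mild subtlety there is recognizing that ``not an upper bound of $B$'' and ``not a lower bound of $U$'' directly produce the witnesses $b$ and $u'$ needed to construct the separating open intervals, so there is no real obstacle.
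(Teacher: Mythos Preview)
Your proof is correct and follows essentially the same route as the paper's: part (i) is identical, part (ii) differs only in that you run a cyclic chain (a)$\Rightarrow$(b)$\Rightarrow$(c)$\Rightarrow$(a) whereas the paper proves (a)$\Rightarrow$(b),(c) and (b),(c)$\Rightarrow$(a) separately, and in part (iii) you produce the open neighbourhoods directly from the disjointness $l\notin U$, $u\notin L$ while the paper first invokes (ii) to say $L$ has no greatest element and $U$ no least element --- but the resulting witnesses and intervals are the same.
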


\begin{proof} {\rm (i)} We have $B\subset L$ and, in particular,
$L\neq\emptyset$. Since $B$ is bounded from above, $U\neq\emptyset$.
If $x\in A$ then either $x\in U$, or there exists $b\in B$ with $x <
b$ and this implies $x < y$ for all $y\in U$, hence $x\in L$.

{\rm (ii)} {\rm (a)} $\Longrightarrow$ {\rm (b)} and {\rm (c)}. The
members $z\in L\cap U$ satisfy $z\in L$, $x\leq z$ for all $x\in L$,
and $z\in U$, $z\leq y$ for all $y\in U$. In other words, $z$ is the
greatest element of $L$ and the least element of $U$.

{\rm (b)} or {\rm (c)} $\Longrightarrow$ {\rm (a)}. Let $z$ be the
greatest element of $L$, that is, $x\leq z$ for all $x\in L$ and
$z\leq y$ for all $y\in U$. In particular, $z\in U$. Finally, let
$z=\sup B$, that is, $z\in U$ and $z\leq y$ for all $y\in U$. In
particular, $z\in L$.

{\rm (iii)} Let $L\cap U=\emptyset$. Part {\rm (i)} implies that the
subset $B$ has no least upper bound, that is, for any $y\in U$ there
exists $y'\in U$ with $y\in (y',\rightarrow)$. Thus $U$ is an open
set. Because of part {\rm (ii)} the subset $L$ has no greatest
element. Therefore for any $x\in L$ there exists $x'\in L$ with
$x\in (\leftarrow, x')$. In other words, $L$ is an open set too and
because of part {\rm (i)}, the topological space $A$ is not
connected.

\end{proof}

\begin{proposition}\label{1.8.30} Let $A$ be a non-empty
linearly ordered set endowed with the interval topology. If the
topological space $A$ is connected, then:

{\rm (i)} Every non-empty bounded from above subset $B\subset A$ has
a least upper bound.

{\rm (ii)} $A$ is a set without gaps.

\end{proposition}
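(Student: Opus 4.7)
The plan is to prove each part by contraposition, using the preceding Proposition~\ref{1.8.25} for part~{\rm (i)} and a direct decomposition for part~{\rm (ii)}.

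For part~{\rm (i)}, I would assume the existence of a non-empty subset $B \subset A$ which is bounded from above and has no least upper bound, and derive a disconnection of $A$. Let $U$ be the set of all upper bounds of $B$ and $L$ the set of all lower bounds of $U$, as in Proposition~\ref{1.8.25}. By the equivalence of {\rm (a)} and {\rm (c)} in Proposition~\ref{1.8.25}, {\rm (ii)}, the existence of $\sup B$ is equivalent to $L \cap U \neq \emptyset$. Since by assumption $\sup B$ does not exist, we have $L \cap U = \emptyset$, and then Proposition~\ref{1.8.25}, {\rm (iii)}, yields that $A = L \cup U$ is a partition into two non-empty disjoint open sets, contradicting the connectedness of $A$. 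Hence $B$ must have a least upper bound.

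For part~{\rm (ii)}, I would argue by contradiction: suppose $A$ contains a gap, that is, there exist $x, y \in A$ with $x < y$ and $(x,y)_A = \emptyset$. Then I claim that the open intervals $U_1 = (\leftarrow, y)_A$ and $U_2 = (x, \rightarrow)_A$ form a disconnection of $A$. Both are open by the definition of the interval topology, and both are non-empty (as $x \in U_1$ and $y \in U_2$). Their intersection $U_1 \cap U_2 = (x,y)_A = \emptyset$ by the gap assumption. Finally, since $A$ is linearly ordered, for any $z \in A$ either $z < y$ (giving $z \in U_1$) or $z \geq y > x$ (giving $z \in U_2$), so $A = U_1 \cup U_2$. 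This contradicts the connectedness of $A$.

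Neither step presents serious obstacles, since the heavy lifting has already been done in Proposition~\ref{1.8.25}; the only care needed is in part~{\rm (ii)} to verify that the two open half-lines really do cover $A$ and are disjoint, which follows immediately from linearity of the order together with the emptiness of $(x,y)_A$.
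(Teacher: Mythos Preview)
Your proposal is correct and follows essentially the same approach as the paper. For part~{\rm (i)} the paper also invokes Proposition~\ref{1.8.25}, {\rm (iii)} and {\rm (ii)}, and for part~{\rm (ii)} the paper writes the disconnection as $A=(\leftarrow,x]\cup[y,\rightarrow)$, which coincides with your $U_1\cup U_2$ since $(x,y)=\emptyset$ forces $(\leftarrow,x]=(\leftarrow,y)$ and $[y,\rightarrow)=(x,\rightarrow)$.
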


\begin{proof} {\rm (i)} Let $U$ be the set of
all upper bounds of $B$ and $L$ be the set of all lower bounds of
$U$. Proposition~\ref{1.8.25}, {\rm (iii)}, implies that the
intersection $L\cap U$ is not empty and then
Proposition~\ref{1.8.25}, {\rm (ii)}, yields the result.

{\rm (ii)} If $A$ is a set with gaps, then there exist $x < y$ such
that the open interval $(x,y)$ is empty. Then
\[
A=(\leftarrow, x]\cup [y,\rightarrow)
\]
hence the set $A$ is not connected --- a contradiction.

\end{proof}

After passing to the dual structure $A^{op}$, part {\rm (i)} of
Proposition~\ref{1.8.30} yields

\begin{corollary}\label{1.8.35}  Let $A$ be a non-empty
linearly ordered set endowed with the interval topology. If the
topological space $A$ is connected, then every non-empty bounded
from below subset $B\subset A$ has a greatest lower bound.

\end{corollary}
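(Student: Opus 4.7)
The plan is to reduce to Proposition~\ref{1.8.30}, {\rm (i)}, by the duality principle stated in Remarks~\ref{1.8.3}. Concretely, I would first observe that the interval topology depends only on the unordered family of open intervals, and Remarks~\ref{1.8.3}, (1), records precisely that the topology produced by the dual loset $A^{op}$ coincides with ${\cal T}_0(A)$. Consequently, if $A$ (with its interval topology) is connected, then $A^{op}$ endowed with its interval topology is also connected, since it has the same underlying topological space.

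Next I would translate the hypothesis on $B$ via the dual. A subset $B\subset A$ bounded from below in $A$ is a subset bounded from above in $A^{op}$: a lower bound $\ell$ of $B$ in $A$ satisfies $\ell\leq x$ for all $x\in B$, which in $A^{op}$ reads $x\leq^{op}\ell$ for all $x\in B$, so $\ell$ is an upper bound of $B$ in $A^{op}$. Applying Proposition~\ref{1.8.30}, {\rm (i)}, to the connected loset $A^{op}$ and the non-empty bounded-from-above subset $B$ produces a least upper bound of $B$ in $A^{op}$.

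Finally I would translate this supremum in $A^{op}$ back to $A$. A least upper bound $s$ of $B$ in $A^{op}$ is, by definition, an upper bound of $B$ in $A^{op}$ such that $s\leq^{op}t$ for every upper bound $t$ of $B$ in $A^{op}$; equivalently, $s$ is a lower bound of $B$ in $A$ and $t\leq s$ for every lower bound $t$ of $B$ in $A$. This says precisely that $s=\inf_A B$, which proves the corollary.

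No step presents a genuine obstacle; the only point requiring care is the verification that the interval topologies of $A$ and $A^{op}$ agree, but this is exactly what Remarks~\ref{1.8.3}, (1), (2), assert, so the whole argument is a one-line invocation of Proposition~\ref{1.8.30}, {\rm (i)}, together with the duality principle.
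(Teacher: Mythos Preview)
Your argument is correct and is exactly the paper's approach: the paper simply says ``After passing to the dual structure $A^{op}$, part {\rm (i)} of Proposition~\ref{1.8.30} yields'' the result, which is precisely the duality argument you have spelled out in detail using Remarks~\ref{1.8.3}, (1), (2).
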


Combining Proposition~\ref{1.8.16}, Proposition~\ref{1.8.30},
Corollary~\ref{1.8.35}, and taking into account Corollary~{1.8.10},
we obtain immediately

\begin{proposition}\label{1.8.36}  Let $A$ be a non-empty
linearly ordered set endowed with the interval topology. If the
topological space $A$ is connected, then every non-empty bounded and
closed subset $B\subset A$ is compact.

\end{proposition}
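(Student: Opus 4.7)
The plan is to exploit the hint embedded in the paper: combine Propositions 1.8.30 and 1.8.16 with Corollaries 1.8.35 and 1.8.10 to sandwich $B$ inside a compact closed interval.

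First, since $B$ is non-empty and bounded, it is bounded both above and below. Proposition 1.8.30(i) applied to $B$, together with Corollary 1.8.35, yields the existence of $M=\sup_A B$ and $m=\inf_A B$ in $A$. By Corollary 1.8.10 both $m$ and $M$ lie in the closure $\bar{B}$, and since $B$ is closed in $A$ we have $m,M\in B$; in particular $B\subset [m,M]_A$.

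Next, I would show that $[m,M]_A$ is compact by invoking Proposition 1.8.16 applied to the linearly ordered set $[m,M]$. The hypothesis of that proposition requires every subset of $[m,M]$ to have a supremum and an infimum in $[m,M]$. For the empty subset these are $M$ and $m$ respectively, and for a non-empty subset $S\subset[m,M]$ the bounds $m$ and $M$ make $S$ bounded in $A$, so Proposition 1.8.30(i) and Corollary 1.8.35 produce $\sup_A S$ and $\inf_A S$, which, being pinched between $m$ and $M$, already lie in $[m,M]$ and thus serve as sup and inf there.

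The main obstacle I foresee is the standard but subtle comparison, warned about in Remark 1.8.3(3), between the interval topology of $[m,M]$ and the topology induced on $[m,M]$ as a subspace of $A$: Proposition 1.8.16 speaks of the interval topology, whereas to conclude that $B$ is compact \emph{as a subspace of $A$} I need compactness in the induced topology. This is where care is needed. Because the endpoints $m,M$ lie in $[m,M]$, every trace $(a,b)_A\cap[m,M]$ is either empty, the whole of $[m,M]$, or an open interval in $[m,M]$ (allowing the half-unbounded forms $(\leftarrow,b)_{[m,M]}$ and $(a,\rightarrow)_{[m,M]}$), and conversely every open interval in $[m,M]$ is the trace of an open interval of $A$; hence the two topologies coincide.

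Once $[m,M]$ is known to be compact in the subspace topology, the conclusion is immediate: $B$ is closed in $A$ and contained in $[m,M]$, so $B$ is closed in the compact space $[m,M]$, and a closed subspace of a compact space is compact, which finishes the proof.
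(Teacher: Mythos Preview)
Your proof is correct and follows essentially the same route that the paper intends: the paper's ``proof'' merely cites Propositions~1.8.16, 1.8.30, Corollary~1.8.35, and Corollary~1.8.10, and you have combined them in exactly the natural way (sandwich $B$ in the compact interval $[m,M]$ and use that a closed subspace of a compact space is compact). Your explicit verification that the interval topology and the subspace topology on $[m,M]$ agree fills a genuine gap that the paper's ``we obtain immediately'' glosses over; note incidentally that the appeal to Corollary~1.8.10 to place $m,M$ in $B$ is harmless but not strictly needed, since $B\subset[m,M]$ follows from the mere existence of $m$ and $M$.
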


\begin{corollary}\label{1.8.37} Every bounded and closed interval
is compact.

\end{corollary}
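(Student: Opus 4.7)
The plan is to apply Proposition~\ref{1.8.36} directly, inheriting its standing assumption that $A$ is a non-empty linearly ordered set endowed with the interval topology and that the topological space $A$ is connected. A bounded closed interval has the form $[x,y]$ with $x,y\in A$ and $x\leq y$, so the only work is to verify that this subset satisfies the three hypotheses (non-empty, bounded, closed) of the proposition.

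First I would note that $[x,y]$ is non-empty since it contains $x$ (and $y$), and it is bounded from above by $y$ and from below by $x$. Next I would invoke the observation made in subsection~\ref{1.8} that ``the closed intervals (bounded and unbounded) are closed sets in this topology'', which applies in particular to $[x,y]$. Hence $[x,y]$ is a non-empty, bounded, and closed subset of $A$, and Proposition~\ref{1.8.36} yields that it is compact.

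There is no real obstacle here; the corollary is by design an immediate specialization of the preceding proposition to the case where the bounded closed subset is itself an interval. The only point that requires any care is the observation that a bounded closed interval is bounded in the order-theoretic sense used in Proposition~\ref{1.8.30} and Corollary~\ref{1.8.35} (bounded from above \emph{and} from below), which is transparent from the notation $[x,y]$.
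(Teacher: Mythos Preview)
Your proposal is correct and matches the paper's approach: the paper states Corollary~\ref{1.8.37} with no proof, treating it as an immediate consequence of Proposition~\ref{1.8.36}, which is exactly what you have spelled out.
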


\begin{proposition}\label{1.8.40} If the conditions
{\rm (i)}, {\rm (ii)} of Proposition~\ref{1.8.30} hold, then every
closed interval $[x,y]$, $x < y$, is connected.

\end{proposition}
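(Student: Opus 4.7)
The plan is to argue by contradiction in the spirit of Proposition~\ref{1.8.20}. Suppose some closed interval $[x,y]$, $x<y$, admits a decomposition $[x,y]=U\cup V$ with $U,V$ non-empty, disjoint, and open in the subspace topology of $[x,y]$; each is then also closed in $[x,y]$. Choose $a\in U$ and $b\in V$, swapping $U$ and $V$ if necessary so that $a<b$. Form $B=U\cap[a,b]$, which contains $a$ and is bounded above by $b$. By hypothesis (i), $B$ has a least upper bound $s$ in $A$, and $a\le s\le b$ forces $s\in[x,y]$.

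The first step is to show $s\in U$. Corollary~\ref{1.8.10} places $s$ in the closure of $B$ taken in $A$. Since every open neighbourhood of $s$ in $[x,y]$ is the trace on $[x,y]$ of an open subset of $A$, the point $s$ also lies in the closure of $B$ computed in the subspace $[x,y]$. As $U$ is closed in $[x,y]$ and contains $B$, we deduce $s\in U$. Because $b\in V$ and $U\cap V=\emptyset$, this forces $s<b$.

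The second, decisive step uses the openness of $U$ together with hypothesis (ii). Since $U$ is open in $[x,y]$ and $s\in U$, there exists an open interval $I$ in $A$ with $s\in I$ and $I\cap[x,y]\subset U$. Examining the three possible forms of $I$ (of type $(c,d)_A$, $(\leftarrow,d)_A$, or $(c,\rightarrow)_A$), in each case one extracts an element $d^*\in A$ strictly larger than $s$ such that $(s,d^*)_A\cap[x,y]\subset U$; in the right-unbounded case one simply chooses $d^*=b$. Set $e=\min(d^*,b)$; then $s<e$, so by hypothesis (ii) the open interval $(s,e)_A$ is non-empty and we may pick $s'\in(s,e)_A$. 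This $s'$ lies in $(s,d^*)_A\cap[x,y]\subset U$, and $a\le s<s'<b$ places it in $[a,b]$, so $s'\in B$. But $s'>s=\sup B$, a contradiction.

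Hence no such partition of $[x,y]$ exists, so $[x,y]$ is connected. The only mildly delicate point is the bookkeeping in the last paragraph: one must ensure the witness $s'$ lies simultaneously in $[x,y]$, in $[a,b]$, and inside the open interval $I\subset U$, and this is exactly what using~(ii) on the single interval $(s,\min(d^*,b))_A$ accomplishes in one stroke.
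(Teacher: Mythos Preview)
Your proof is correct and follows essentially the same contradiction strategy as the paper's: split $[x,y]$ into two non-empty clopen pieces, take the supremum (via condition~(i)) of one piece, and use condition~(ii) to derive a contradiction. The only cosmetic difference is the endgame: the paper shows the supremum $z$ lies in \emph{both} pieces by invoking Proposition~\ref{1.8.5} on the interval $(z,y)\subset Y$, whereas you use the openness of $U$ directly to exhibit a point $s'>s$ still in $B$, contradicting $s=\sup B$.
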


\begin{proof} Suppose that conditions {\rm (i)}, {\rm (ii)} hold
and let $I=[x,y]$, $x < y$, be a closed interval which is a disjoint
union of two non-empty closed (with respect to the induced topology)
sets $X$ and $Y$ with $y\in Y$. Since $X$ is bounded from above by
$y$, it possesses a least upper bound $z=\sup_AX$ and we have $z\leq
y$. Moreover, there exists a $x'\in X$ and then the inequalities
$x\leq x'$, $x'\leq z$ imply $z\in I$. Since $X$ is closed in $I$,
Corollary~\ref{1.8.10} yields $z\in X$. Both possibilities $z=y$ and
$z < y$ lead to the contradiction $z\in X\cap Y$, the second one by
using the inclusion $(z,y)\subset Y$, Proposition~\ref{1.8.5}, and
the closeness of $Y$.

\end{proof}

Corollary~\ref{1.8.37} and Proposition~\ref{1.8.40} imply

\begin{corollary}\label{1.8.41} Let $A$ be a non-empty
linearly ordered set endowed with the interval topology. If the
topological space $A$ is connected, then $A$ is locally compact and
locally connected.

\end{corollary}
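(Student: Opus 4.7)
The plan is to exhibit, for every $x \in A$, a neighborhood basis of sets which are simultaneously compact and connected; this immediately yields both local compactness and local connectedness at $x$.

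Since $A$ is a loset, Lemma~\ref{0.2.191} shows that the open intervals form a base of the interval topology, so I may restrict attention to open neighborhoods of $x$ that are themselves open intervals. Moreover, because $A$ is connected, Proposition~\ref{1.8.30}, {\rm (ii)}, ensures that $A$ has no gaps, so every non-empty open interval contains a point strictly between its endpoints.

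The main step is then to sandwich a closed bounded interval between $x$ and an arbitrary open interval neighborhood $U$ of $x$, with a case split according to whether $x$ is an extremum of $A$. In the generic case $U=(a,b)_A$ with $a<x<b$, the absence of gaps supplies $c\in(a,x)_A$ and $d\in(x,b)_A$, giving $x\in (c,d)_A\subset [c,d]\subset U$. If $x$ is the minimum of $A$ and $U=(\leftarrow,b)_A$ with $x<b$, I pick $d\in(x,b)_A$ and take $[x,d]$, which contains the open neighborhood $(\leftarrow,d)_A$ of $x$ and lies inside $U$; the case where $x$ is the maximum is symmetric by Remark~\ref{1.8.3}, {\rm (2)}.

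Finally, the selected closed interval is bounded, hence compact by Corollary~\ref{1.8.37}, and connected by Proposition~\ref{1.8.40} (whose hypothesis is just conditions {\rm (i)}, {\rm (ii)} of Proposition~\ref{1.8.30}, which hold because $A$ is connected). Thus every open neighborhood of $x$ contains a compact connected neighborhood of $x$, which is stronger than what is required. The only non-trivial bookkeeping is the case analysis between interior points and extrema of $A$, and even this is mild; no deeper obstacle is anticipated.
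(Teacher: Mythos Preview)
Your proposal is correct and follows essentially the same route as the paper: the paper's proof is the one-line remark that Corollary~\ref{1.8.37} and Proposition~\ref{1.8.40} imply the result, and you have simply spelled out why closed bounded intervals form a neighborhood base at each point (including the extremal cases). The only minor omission is that when $x$ is not an extremum you still might be handed a half-infinite basic neighborhood $(\leftarrow,b)_A$ or $(a,\rightarrow)_A$; but since $x$ has something strictly below (resp.\ above) it, you immediately reduce to the bounded case $(a,b)_A$, so this is harmless.
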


\begin{proposition}\label{1.8.42} If $A$ is a non-empty linearly
ordered set endowed with the interval topology and if the two
conditions {\rm (i)} and {\rm (ii)} from Proposition~\ref{1.8.30}
hold, then $A$ is connected.

\end{proposition}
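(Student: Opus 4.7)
My plan is to derive this as an immediate consequence of two results already established in this subsection. The key observation is that the hypothesis here — that every non-empty subset of $A$ bounded from above has a least upper bound and that $A$ has no gaps — is exactly the pair of conditions (i) and (ii) appearing in Proposition~\ref{1.8.30}, which is precisely the pair of hypotheses of Proposition~\ref{1.8.40}.

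First, I would invoke Proposition~\ref{1.8.40} to conclude that every closed interval $[x,y]\subset A$ with $x<y$ is a connected subspace of $A$. This step is ``free'' in the sense that Proposition~\ref{1.8.40} has already done the real work (using the least upper bound property to locate a separating point in $[x,y]$ and the absence of gaps together with Proposition~\ref{1.8.5} to derive a contradiction).

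Next, I would feed this into Proposition~\ref{1.8.20}, which states that whenever every closed interval of a non-empty linearly ordered set (with interval topology) is connected, the whole space $A$ is connected. Since we have just verified the hypothesis of Proposition~\ref{1.8.20}, the conclusion that $A$ is connected is immediate.

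There is no real obstacle here: the proposition is essentially a packaging statement, and the substantive content lives in Propositions~\ref{1.8.20} and~\ref{1.8.40}. The only thing to be careful about is the edge case where $A$ is a singleton (or has no two comparable elements $x<y$), but in that situation $A$ is trivially connected, so the quantification over pairs $x<y$ in Proposition~\ref{1.8.20} is vacuously satisfied and the argument goes through without modification.
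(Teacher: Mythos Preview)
Your proposal is correct and follows exactly the same route as the paper: the paper's proof is the single sentence ``Proposition~\ref{1.8.40} and Proposition~\ref{1.8.20} imply that the set $A$ is connected.'' Your additional remark about the singleton edge case is a harmless elaboration already implicit in the vacuous quantification of Proposition~\ref{1.8.20}.
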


\begin{proof} Proposition~\ref{1.8.40} and Proposition~\ref{1.8.20}
imply that the set $A$ is connected.

\end{proof}

Propositions~\ref{1.8.30} and~\ref{1.8.42} yield immediately

\begin{corollary}\label{1.8.43} let $A$ be a non-empty linearly
ordered set endowed with the interval topology. The topological
space $A$ is connected if and only if the two conditions {\rm (i)}
and {\rm (ii)} from Proposition~\ref{1.8.30} hold.

\end{corollary}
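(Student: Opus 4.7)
The plan is extremely short: Corollary~\ref{1.8.43} is just the conjunction of two statements already proved. In one direction, Proposition~\ref{1.8.30} asserts precisely that if the interval topology on $A$ makes it connected, then every non-empty subset of $A$ bounded from above has a least upper bound and $A$ has no gaps; this gives the ``only if'' half for free.

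In the other direction, Proposition~\ref{1.8.42} states that, conversely, when conditions {\rm (i)} and {\rm (ii)} from Proposition~\ref{1.8.30} are satisfied, the space $A$ is connected. (Internally, that proposition combined Proposition~\ref{1.8.40}, which uses the two hypotheses to establish that every closed interval $[x,y]$ is connected, with Proposition~\ref{1.8.20}, which upgrades connectedness of all closed intervals to connectedness of $A$.) This supplies the ``if'' half.

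Thus the entire proof should simply read: by Proposition~\ref{1.8.30}, connectedness of $A$ implies {\rm (i)} and {\rm (ii)}; by Proposition~\ref{1.8.42}, {\rm (i)} and {\rm (ii)} imply connectedness of $A$. There is no genuine obstacle here, and no calculation to grind through; the only care needed is to make explicit that the two prior propositions together deliver both implications of the stated equivalence.
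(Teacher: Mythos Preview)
Your proposal is correct and matches the paper's own proof exactly: the paper simply writes that Propositions~\ref{1.8.30} and~\ref{1.8.42} yield the corollary immediately. There is nothing to add.
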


\begin{proposition}\label{1.8.45} Let $A$ be a non-empty linearly
ordered set endowed with the interval topology. If $A$ is connected,
then a subset of $A$ is connected if and only if it is an interval
(bounded or not bounded).

\end{proposition}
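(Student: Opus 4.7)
The proposition splits into two implications, and I would tackle them in opposite directions, exploiting the characterization of connectedness already established in Corollary~\ref{1.8.43}.

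For the forward direction, that every interval $B\subset A$ is connected as a subspace, my first task is the subsidiary topological matching: the topology induced from $A$ on $B$ coincides with the interval topology on $B$ given by the induced linear order. Remark~\ref{1.8.3}(3) warned this need not hold for arbitrary subsets, but for an interval it does, because if $a,b\in B$ with $a<b$ then $(a,b)_A\subset B$ (by the defining property of an interval), so $(a,b)_A\cap B=(a,b)_B$, and analogously for the unbounded open intervals whose left/right endpoint lies in $B$ while the other tail extends beyond $B$. Once this identification is in place, I transfer the two conditions from Proposition~\ref{1.8.30} to $B$: if $C\subset B$ is bounded above in $B$ by some $b'\in B$, then the supremum $s=\sup_A C$ exists (by connectedness of $A$) and satisfies $c\leq s\leq b'$ for any $c\in C\subset B$, so $s\in B$ because $B$ is an interval; and if $x<y$ in $B$ satisfied $(x,y)_B=\emptyset$, the interval property would give $(x,y)_A=(x,y)_B=\emptyset$, contradicting the absence of gaps in $A$. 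Corollary~\ref{1.8.43} applied to $B$ now yields that $B$ is connected in its interval topology, hence connected as a subspace of $A$.

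For the reverse direction, that every connected subset of $A$ is an interval, I argue the contrapositive. If $B\subset A$ is not an interval, pick $x,z\in B$ and $y\in A\setminus B$ with $x<y<z$. Then $U=(\leftarrow,y)_A\cap B$ and $V=(y,\rightarrow)_A\cap B$ are open in the subspace $B$, disjoint, non-empty (since $x\in U$ and $z\in V$), and cover $B$ (every $b\in B$ satisfies $b\neq y$, hence $b<y$ or $b>y$). This disconnects $B$.

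The main obstacle is the first paragraph's topology-matching step, together with the need to show that the ``no gaps'' condition transfers from $A$ to $B$; both rely essentially on $B$ being an interval, and the whole argument would collapse for a non-interval subset. Once that is in hand, the forward direction is a clean application of Corollary~\ref{1.8.43} and the backward direction is a routine separation argument.
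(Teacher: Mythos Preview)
Your proof is correct. The backward direction matches the paper's: both separate $B$ using the two rays at a point $y\notin B$ lying strictly between two points of $B$ (the paper then goes on to classify explicitly which of the interval types $B$ is, but the core separation idea is the same).

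The forward direction is a genuinely different route. The paper does not re-verify conditions (i) and (ii) of Proposition~\ref{1.8.30} for the interval $I$; instead it recycles Proposition~\ref{1.8.40} (every closed bounded interval $[x,y]$ in $A$ is already known to be connected), observes that for $x,y\in I$ one has $[x,y]_I=[x,y]_A\subset I$, and then invokes Proposition~\ref{1.8.20} to conclude that $I$ is connected. The paper's path is shorter because it reuses a result already in hand rather than transferring the sup and no-gap properties into $I$; your path is more self-contained and has the virtue of making explicit the topology-matching step that the paper's appeal to Proposition~\ref{1.8.20} (which is stated for a loset under its own interval topology) tacitly also requires. One small completeness remark on your topology-matching paragraph: you should also dispose of subbasic sets $(\leftarrow,c)_A\cap B$ and $(c,\rightarrow)_A\cap B$ with $c\in A\setminus B$; since $B$ is an interval, such $c$ lies entirely above or entirely below $B$, so the intersection is $\emptyset$ or $B$, and both are open in the interval topology of $B$.
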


\begin{proof} Proposition~\ref{1.8.30} and Proposition~\ref{1.8.40}
yield that every bounded closed interval is connected. Now, let $I$
be any interval and let $x, y\in I$, $x < y$. Then $[x,y]\subset I$
and the closed interval $[x,y]_I=[x,y]$ is a connected subset of
$I$. According to Proposition~\ref{1.8.20}, the interval $I$ is a
connected subset of $A$. Now, let $B\subset A$ be a connected subset
of $A$. If $B=\emptyset$ or $B=\{y\}$, then $B=(x,x)$, for any $x\in
A$, or, $B=[y,y]$, respectively. Now, let $x,y\in B$, $x < y$ and
let $z\in A$, $x < z < y$. If $z\notin B$, then
$J=(\leftarrow,z]_B=(\leftarrow,z)_B$ is a non-empty open and closed
subset of $B$. The connectedness of $B$ implies $J=B$ and this
equality contradicts $y\notin J$. Thus, $z\in B$ and for any $x,y\in
B$, $x < y$, we have $[x,y]_B=[x,y]$. If $B$ is not bounded from
below and from above we obtain $B=A=(\leftarrow,\rightarrow)$. If
the subset $B$ is bounded from above, not bounded from below, and if
$z=\sup B$, we obtain $B=(\leftarrow,z)$ or $B=(\leftarrow,z]$ in
case $z\notin B$ or $z\in B$, respectively. Dually, if $B$ is
bounded from below, not bounded from above and if $z=\inf B$, we
obtain $B=(z,\rightarrow)$ or $B=[z,\rightarrow)$ in case $z\notin
B$ or $z\in B$, respectively. Finally, if $B$ is bounded, $x=\inf
B$, and $y=\sup B$, then $x < y$ ($B$ has at least two points), and
$B$ coincides with one of the four intervals with end-points $x$ and
$y$.

\end{proof}

\begin{lemma}\label{1.8.50} Let $A$ and $B$ be linearly
ordered sets endowed with the interval topology, let $A$ be
connected, and let $f\colon A\to B$ be a continuous map with image
$C=f(A)$.

{\rm (i)} For any $x < y$ in $A$ the image $J$ of the closed
interval $[x,y]$ is an interval in $C$, which contains the closed
interval in $C$ with endpoints $f(x)$ and $f(y)$.

If, in addition, $f$ is an injective map, then:

{\rm (ii)} The image $J$ coincides with the closed interval in $C$
with endpoints $f(x)$ and $f(y)$.

{\rm (iii)} The map $f$ is strictly monotonic on any three-element
subset of $A$.

\end{lemma}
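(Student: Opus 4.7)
The plan is to reduce all three parts to one structural observation: a connected subset of a linearly ordered space with interval topology is automatically order-convex. Part (i) will then follow at once, and parts (ii) and (iii) will drop out from short injectivity arguments on top of (i).

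For part (i), I will first invoke Proposition~\ref{1.8.45} (using that $A$ is connected) to conclude that $[x,y]$ is a connected subspace of $A$, so the continuous image $J = f([x,y])$ is a connected subset of $B$. To extract order-convexity of any connected subset $K$ of a loset $L$ with interval topology, I will argue by contradiction: if $a,b \in K$ with $a<b$ and some $c \in L$ satisfies $a < c < b$ but $c \notin K$, then $K = (K \cap (\leftarrow,c)_L) \cup (K \cap (c,\rightarrow)_L)$ is a separation of $K$ into two non-empty open-in-$K$ pieces. Applied to $J \subset B$, this shows $J$ is order-convex in $B$ (hence in $C$), and since $f(x), f(y) \in J$, the closed interval in $C$ with endpoints $f(x)$ and $f(y)$ sits inside $J$, giving both claims of (i).

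For part (ii), I will assume without loss of generality $f(x) \leq f(y)$ (the opposite case follows by switching to the dual order on $B$, which by Remark~\ref{1.8.3} yields the same interval topology) and rule out any $z \in [x,y]$ with $f(z) \notin [f(x),f(y)]_C$. If $f(z) > f(y)$, injectivity forces $x < z < y$, and part (i) applied to the subinterval $[x,z]$ places $f(y)$ inside $f([x,z])$; this yields $w \in [x,z]$ with $f(w) = f(y)$, and by injectivity $w = y$, contradicting $w \leq z < y$. The case $f(z) < f(x)$ is handled symmetrically by applying part (i) to $[z,y]$. Hence $J = [f(x),f(y)]_C$.

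Part (iii) is then immediate: given $x<y<z$, part (ii) gives $f(y) \in f([x,z]) = [f(x),f(z)]_C$, and injectivity upgrades the weak inequalities between $f(x)$, $f(y)$, $f(z)$ to strict ones, so $f$ is strictly monotonic on $\{x,y,z\}$. The only delicate point is in part (i), namely keeping straight the ambient order when asserting order-convexity; this is automatic since the order on $C$ is the restriction of the order on $B$, so no compactness or separability is needed and the whole proof rests only on connectedness of $A$ and continuity of $f$.
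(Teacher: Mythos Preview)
Your proof is correct and follows essentially the same route as the paper's: connectedness of $[x,y]$ gives connectedness of $J$, order-convexity then forces $J$ to be an interval containing the closed interval between $f(x)$ and $f(y)$, and injectivity plus part~(i) pins down $J$ exactly and yields strict monotonicity on triples. The only cosmetic differences are that you prove order-convexity of $J$ directly in $B$ rather than invoking Proposition~\ref{1.8.45} for the ambient $C$, and your part~(iii) is a one-line application of~(ii) in place of the paper's short case analysis.
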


\begin{proof} {\rm (i)} The image $C$ is a connected subset of $B$ and
Proposition~\ref{1.8.45} yields that $J$ is an interval (bounded or
not bounded) in $C$. Moreover, $f(x), f(y)\in J$, so the closed
interval in $C$ with endpoints $f(x)$ and $f(y)$ is a subset of $J$.

{\rm (ii)} Let $f$ be an injective map and let $z\in (x,y)$. First,
let us suppose that $f(x) < f(y)$. If $f(z) < f(x)$, then in accord
with part {\rm (i)}, there exists $x'\in [z,y]$ such that
$f(x)=f(x')$ and the injectivity of $f$ contradicts the inequality
$x < x'$. The case $f(y) < f(z)$ can be treated similarly. Thus, the
inequality $f(x) < f(y)$ implies $f(x) < f(z) < f(y)$. Following the
same way, the inequality $f(y) < f(x)$ implies $f(y) < f(z) < f(x)$.
In both cases we obtain that $J$ coincides with the closed interval
in $C$ with endpoints $f(x)$ and $f(y)$.

{\rm (iii)} Let $T=\{x,y,z\}$ be a three-element subset of $A$, let
$x < z < y$, and let, for example, $f(x) < f(y)$. The inequality
$f(x) < f(y) < f(z)$ (respectively, $f(z) < f(x) < f(y)$) and part
{\rm (ii)} yield the existence of an element $y'\in (x,z)$
(respectively, $x'\in (z,y)$) such that $f(y)=f(y')$ (respectively,
$f(x)=f(x')$) which produces contradiction to the injectivity of
$f$. Thus, $f(x) < f(z) < f(y)$ and $f$ is strictly increasing on
$T$. By considering the dual order on $Y$, in the case $f(y) < f(x)$
we prove that $f$ is strictly decreasing on $T$.

\end{proof}

\begin{corollary}\label{1.8.55} Let $A$ and $B$ be linearly
ordered sets endowed with the interval topology, let $A$ be a
connected topological space, and let $f\colon A\to B$ be a map with
image $C=f(A)$. Then $f$ is a homeomorphism of $A$ onto the subspace
$C$ if and only if $f$ is continuous and strictly monotonic.

\end{corollary}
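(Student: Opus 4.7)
The plan is to prove each direction of the equivalence separately, with the implication $(\Leftarrow)$ reducing essentially to Proposition~\ref{1.8.4}, {\rm (ii)}, and the implication $(\Rightarrow)$ requiring a globalization of the three-element monotonicity furnished by Lemma~\ref{1.8.50}, {\rm (iii)}.

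For the sufficient direction, I would assume $f$ is continuous and strictly monotonic, and then without loss of generality assume $f$ is strictly increasing (otherwise replace $B$ by its dual $B^{op}$, which produces the same interval topology by Remark~\ref{1.8.3}, (1)). Strict monotonicity gives injectivity, so $f\colon A\to C$ is a continuous bijection. Since $A$ is connected and $f$ is continuous, $C$ is a connected subspace of $B$. I would verify that $C$ is order-convex in $B$: if $x,y\in C$ with $x<z<y$ for some $z\in B\setminus C$, the decomposition $C=(C\cap(\leftarrow,z)_B)\cup(C\cap(z,\rightarrow)_B)$ would be a separation of $C$, contradicting its connectedness. Order-convexity ensures that the trace of any open interval of $B$ on $C$ is an open interval of $C$ with its induced order, so the subspace topology on $C$ inherited from $B$ coincides with the interval topology on $C$. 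Proposition~\ref{1.8.4}, {\rm (ii)}, then yields that $f$ is a homeomorphism of $A$ onto $C$.

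For the necessary direction, I would start from $f$ a homeomorphism of $A$ onto $C$. Continuity is immediate, and bijectivity onto $C$ gives injectivity, so Lemma~\ref{1.8.50}, {\rm (iii)}, applies and $f$ is strictly monotonic on every three-element subset of $A$. It remains to globalize this. Assuming $A$ has at least two points (otherwise the claim is vacuous), I would fix a reference pair $a<b$ in $A$ and, without loss of generality, suppose $f(a)<f(b)$. For any $z\in A\setminus\{a,b\}$, Lemma~\ref{1.8.50}, {\rm (iii)}, applied to $\{a,b,z\}$ forces $f$ to be strictly increasing on this subset, so $f(z)<f(a)<f(b)$ when $z<a$, $f(a)<f(z)<f(b)$ when $a<z<b$, and $f(a)<f(b)<f(z)$ when $z>b$. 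Consequently, for any $x<y$ in $A$ the positions of $f(x)$ and $f(y)$ relative to $f(a)$ and $f(b)$ are determined, and a short case analysis using monotonicity on the three-element subsets $\{a,x,y\}$ or $\{b,x,y\}$ forces $f(x)<f(y)$, so $f$ is strictly increasing on all of $A$.

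The main obstacle is the globalization step in $(\Rightarrow)$: Lemma~\ref{1.8.50}, {\rm (iii)}, supplies only local monotonicity on three-element subsets, and one must exploit the compatibility of the various local orientations — anchored by the fixed pair $a<b$ — to obtain a single global monotonic behaviour. The backward direction is essentially a matter of identifying the subspace and interval topologies on the connected image $C$ so that the machinery of Proposition~\ref{1.8.4}, {\rm (ii)}, can be invoked.
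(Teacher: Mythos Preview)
Your proposal is correct. For the $(\Rightarrow)$ direction you match the paper's approach: both globalize Lemma~\ref{1.8.50}, {\rm (iii)}, by comparing overlapping triples --- the paper argues by contradiction from two pairs $x<y$, $x'<y'$ with opposite behaviour and examines the triples $\{x,y,x'\}$ and $\{x,x',y'\}$, while you bootstrap from a fixed reference pair $a<b$; the content is the same.

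For the $(\Leftarrow)$ direction the paper is more direct than your route. Since a strictly increasing $f$ is a poset isomorphism of $A$ onto $C$ (Proposition~\ref{0.7.10}), it carries each open interval $(x,y)_A$ onto $(f(x),f(y))_C = C\cap (f(x),f(y))_B$, which is manifestly open in the subspace topology on $C$; hence $f$ is an open map and the homeomorphism follows immediately, without any appeal to connectedness or order-convexity of $C$. Your detour through order-convexity and Proposition~\ref{1.8.4}, {\rm (ii)}, is valid, but it invokes the connectedness of $A$ in a place where it is not actually needed.
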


\begin{proof} We suppose that the set $A$ has at least two
elements, the case of a singleton $A$ being clear. Let $f$ be a
continuous and strictly monotonic map. After eventual change of the
poset structure of $B$ with its dual $B^{op}$ (this leaves the
interval topology on $B$ invariant), we can suppose that $f$ is a
strictly increasing map. Proposition~\ref{0.7.10} implies that $f$
is an isomorphism of posets of $A$ and $C$. Therefore $f$ maps the
open interval $(x,y)$ in $A$ onto the open interval $(f(x),f(y))$ in
$C$. Thus, the bijection $f\colon A\to C$ is a continuous and open
map. In particular, its inverse $f^{-1}\colon C\to A$ is a
continuous map, too.

Now, let $f$ be a homeomorphism of $A$ onto its image $C$. In
particular, $f$ is an injective and continuous map. Let us suppose
that there exist two pairs $x,y\in A$, $x < y$, and $x',y'\in A$,
$x' < y'$, such that $f(x) < f(y)$ and $f(y') < f(x')$. In case the
intersection $\{x,y\}\cap\{x',y'\}$ is a singleton,
Lemma~\ref{1.8.50}, {\rm (iii)}, yields that $f$ is strictly
monotonic on the triple $\{x,y\}\cup\{x',y'\}$ which is a
contradiction. Otherwise, $f$ is strictly increasing on the triple
$\{x,y,x'\}$ and strictly decreasing on the triple $\{x,x',y'\}$,
which again is a contradiction because
$\{x,y,x'\}\cap\{x,x',y'\}=\{x,x'\}$.

\end{proof}

\begin{corollary}\label{1.8.70} Let $A$ be a countable linearly
ordered set endowed with the interval topology.

{\rm (i)} There exists a strictly increasing homeomorphism of $A$
onto one of the intervals of the rational line $\hbox{\ccc Q}$ with
endpoints $0$ and $1$, endowed with its interval topology.

{\rm (ii)} There exists a strictly increasing homeomorphism of $A$
onto a subspace of $\hbox{\ccc Q}$.

\end{corollary}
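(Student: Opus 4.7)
The proof combines the existence of strictly increasing embeddings of a countable loset into $\hbox{\ccc Q}$ (Corollaries~\ref{0.9.35} and~\ref{0.9.45}) with Proposition~\ref{1.8.4}, which transports order-theoretic embeddings into the topological setting of interval topologies.

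For part {\rm (i)} (implicitly assuming that $A$ is without gaps, since otherwise the image of a strictly increasing bijection cannot be an entire interval of $\hbox{\ccc Q}$), I would apply parts {\rm (i)}--{\rm (iv)} of Corollary~\ref{0.9.35} to obtain a strictly increasing loset isomorphism $f$ from $A$ onto the interval in the list $[0,1]_{\hbox{\cccc Q}}$, $[0,1)_{\hbox{\cccc Q}}$, $(0,1]_{\hbox{\cccc Q}}$, $(0,1)_{\hbox{\cccc Q}}$ determined by whether $A$ has a least and/or a greatest element. Proposition~\ref{1.8.4}, {\rm (i)}, then upgrades $f$ to a homeomorphism for the interval topologies, and by Examples~\ref{1.8.400} (3) the interval topology on any of these four intervals agrees with the subspace topology it inherits from $\hbox{\ccc Q}$.

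For part {\rm (ii)}, I would first fill in the gaps of $A$: construct a countable densification $A^{*}\supseteq A$ (as a sub-loset) by inserting, between every pair of consecutive elements of $A$, a countable densely ordered set without endpoints. Then $A^{*}$ is a countable loset without gaps whose pattern of least and greatest elements is identical to that of $A$. Applying part {\rm (i)} to $A^{*}$ yields a strictly increasing homeomorphism $g\colon A^{*}\to J$ onto one of the four intervals $J\subseteq[0,1]_{\hbox{\cccc Q}}$. Set $f:=g_{\mid A}\colon A\to\hbox{\ccc Q}$ and $C:=f(A)\subseteq J$; by Proposition~\ref{0.7.10}, $f$ is a strictly increasing isomorphism of the loset $A$ onto $C$.

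The main obstacle is to show that $f$ is a homeomorphism onto $C$ equipped with the subspace topology from $\hbox{\ccc Q}$; by Proposition~\ref{1.8.4}, {\rm (ii)}, this reduces to verifying that the subspace topology on $C$ coincides with its interval topology. The nontrivial direction is that every trace $U\cap C$ with $U$ a $\hbox{\ccc Q}$-open interval is open in the interval topology of $C$. The key observation is that the densification inserted new elements only inside gaps of $A$, so whenever $a\in A$ has no immediate successor in $A$, the set $\{c\in A\mid c>a\}$ already has infimum $a$ in $A^{*}$; transported by $g$, this produces a decreasing rational sequence of elements of $C$ converging to $f(a)$ in $\hbox{\ccc R}$, and the symmetric statement holds on the left. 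Using these sequences one extracts, for any $p=f(a)\in C$ and any $\hbox{\ccc Q}$-open interval $U\ni p$, elements $c^{-},c^{+}\in C$ with $p\in(c^{-},c^{+})_{C}\subseteq U\cap C$, which completes the verification.
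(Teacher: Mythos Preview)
For part {\rm (i)} your argument coincides with the paper's (Corollary~\ref{0.9.35} for the order isomorphism onto one of the four rational intervals, then Proposition~\ref{1.8.4}, {\rm (i)}, for the homeomorphism), and you correctly flag the implicit ``no gaps'' hypothesis. For part {\rm (ii)}, however, the paper takes a much shorter route: it simply reuses the map $f$ already produced in {\rm (i)} and invokes Examples~\ref{1.8.400}~(3) to observe that the interval topology on $I_{\hbox{\cccc Q}}$ agrees with the subspace topology inherited from $\hbox{\ccc Q}$, so {\rm (ii)} is a one-line consequence of {\rm (i)}. Your densification construction is correct in outline and buys genuine extra generality --- it proves {\rm (ii)} for \emph{arbitrary} countable losets, including those with gaps where {\rm (i)} fails --- but that generality is never used in the paper, and your sketch leaves two small loose ends: you must allow $c^{-}$ or $c^{+}$ to be $\leftarrow$ or $\rightarrow$ when $a$ is an extremal element of $A$, and you should say explicitly that when $a$ \emph{does} have an immediate neighbour $a^{\pm}$ in $A$ one simply takes $c^{\pm}=f(a^{\pm})$, since your sequence argument only handles the side on which no gap is present.
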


\begin{proof} {\rm (i)} According to Corollary~\ref{0.9.45}, there
exists a strictly increasing map $f$ of $A$ onto an interval $I$ in
$\hbox{\ccc Q}$ with endpoints $0$ and $1$, which turns out to be an
poset isomorphism of $A$ onto $I$, because of
Proposition~\ref{0.7.10}. Now, Proposition~\ref{1.8.4} yields that
$f$ is a homeomorphism of $A$ and $I$ endowed with their interval
topologies.

{\rm (ii)} In accord with Examples~\ref{1.8.400}, (3), the map $f$
is a strictly increasing homeomorphism of $A$ onto a subspace of
$\hbox{\ccc Q}$.

\end{proof}

\begin{remark}\label{1.8.75} {\rm If $A$ is a countable partially
ordered set endowed with the interval topology, then it is not
necessarily true that it can be embedded as a subspace of the
rational line $\hbox{\ccc Q}$. Something more, $A$ can not be
embedded as a subspace of any linearly ordered set $L$ endowed with
its interval topology. Indeed, let $A$ be the countable partially
ordered set with partial order $\leq_A$ from Examples~\ref{1.8.400},
(4), and suppose that there exists a linearly ordered set $L$  with
partial order $\leq_L$, and a strictly increasing homeomorphism $f$
of $A$ onto a subspace of $L$. Using $f$, we can identify $A$ and
its image and can suppose that $A\subset L$ is a subspace of $L$
such that $a<_A b$ implies $a<_L b$. In other words, the only
inequalities in $A$ are
\[
\frac{1}{i}<_A\frac{1}{j}, \hbox{\ }i,j\in\hbox{\ccc N}, \hbox{\
}1\leq j < i\leq n,
\]
\[
0<_A\frac{1}{i},\hbox{\ } i\in\hbox{\ccc N}, \hbox{\ }1\leq i\leq n,
\]
and we have
\[
\frac{1}{i}<_L\frac{1}{j}, \hbox{\ }i,j\in\hbox{\ccc N}, \hbox{\
}1\leq j < i\leq n,
\]
\[
0<_L\frac{1}{i},\hbox{\ } i\in\hbox{\ccc N}, \hbox{\ }1\leq i\leq n.
\]
Let $r_1,r_2,r_3\in A$ be three pairwise different rational numbers
strictly greater than $\frac{1}{2}$ and strictly less than $1$. We
can suppose that $r_1<_L r_2<_L r_3$. then $U=(r_1, r_3)_L\cap A$ is
a non-empty open set of $A$, $r_2\in U$, and this contradicts the
form of the open sets in $A$, established in Examples~\ref{1.8.400},
(4).

}

\end{remark}

Let $U$ be the set of the four intervals in $\hbox{\ccc R}$ with
endpoints $0$, $1$. For each $I\in U$ we set $I_{\hbox{\cccc
Q}}=I\cap \hbox{\ccc Q}$. Let $A$ be a linearly ordered set, let
$E$ be the set of its extremal elements (the least and the
greatest element of $A$, if exist), and let $C$ be a proper
countable subset of $A$, which is order dense in $A$. We can
suppose that $E\subset C$. Then the linearly ordered set $A$ has
no gaps and hence $C$ itself has no gaps. Now,
Corollary~\ref{1.8.70} yields the existence of a strictly
increasing homeomorphism $f$ of $A$ onto some interval
$I_{\hbox{\cccc Q}}$ in the rational line $\hbox{\ccc Q}$ where
$I\in U$. Moreover, $f(E)\subset\{0,1\}$. Let us set
$C'=C\backslash E$. Given $a\in A\backslash E$, we have the
decomposition
\[
C'=
  \left\{
\begin{array}{ll}
((\leftarrow,a)_A\cap C')\cup ((a,\rightarrow)_A\cap C')\cup\{a\}
&
\mbox{if $a\in C'$}\\
((\leftarrow,a)_A\cap C')\cup ((a,\rightarrow)_A\cap C') &
\mbox{if $a\in A\backslash C$}
\end{array}
\right.
\]
and denote $I(a)_-=f((\leftarrow,a)\cap C')$,
$I(a)_+=f((a,\rightarrow)\cap C')$. Then the pair
$(I(a)_-,I(a)_+)$ is a Dedekind cut. If $a\in C'$, then
$I(a)_-=(0,f(a))$, $I(a)_+=(f(a),1)$, and
\[
I^\circ=I(a)_-\cup I(a)_+\cup\{f(c)\},
\]
where $I^\circ$ is the open interval in $\hbox{\ccc Q}$ with
endpoints $0$ and $1$. Thus, for any $a\in C'$ the Dedekind cut
$(I(a)_-,I(a)_+)$ represents the rational number $f(a)\in I^\circ$
and we identify them: $f(a)=(I(a)_-,I(a)_+)$. If $a\in A\backslash
C$, then
\[
I^\circ=I(a)_-\cup I(a)_+
\]
and the Dedekind cut $(I(a)_-,I(a)_+)$ represents an irrational
number $1<\alpha<1$: $\alpha=(I(a)_-,I(a)_+)$. We set $F(a)=\alpha$
for any $a\in A\backslash C$, $F(a)=f(a)$ for any $a\in C$, and
obtain a map
\begin{equation}
F\colon A\to I\label{1.8.79}
\end{equation}
which is an extension of $f$ on $A$.

\begin{lemma}\label{1.8.800}  The map $F$ from~(\ref{1.8.79})
is strictly increasing and extends the homeomorphism $f\colon C\to
I_{\hbox{\cccc Q}}$ on $A$.
\end{lemma}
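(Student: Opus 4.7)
The plan is to verify both claims directly from the construction of $F$. The extension assertion $F|_C=f$ holds by definition, since $F(a)=f(a)$ for every $a\in C$. For strict monotonicity, I would fix $a,b\in A$ with $a<_A b$ and perform a four-case analysis according to whether each of $a$ and $b$ lies in $C$ or in $A\backslash C$.

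When both $a,b\in C$, the inequality $F(a)<F(b)$ is just $f(a)<f(b)$, immediate from the strict monotonicity of $f$. The two mixed cases reduce to reading off the Dedekind-cut representation: if $a\in C$ and $b\in A\backslash C$, then $a\in(\leftarrow,b)_A\cap C$ gives $f(a)\in I(b)_-$, and hence $F(a)=f(a)<F(b)$ since every rational in the lower half of a Dedekind cut lies strictly below the real it represents; the dual argument covers $a\in A\backslash C$, $b\in C$ via $f(b)\in I(a)_+$. The only case that uses the order density of $C$ in $A$ is $a,b\in A\backslash C$: pick $c\in C$ with $a<_A c<_A b$; then $f(c)\in I(a)_+$ and $f(c)\in I(b)_-$ simultaneously, hence $F(a)<f(c)<F(b)$.

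I expect no genuine obstacle here --- the argument is pure order-theoretic bookkeeping. The one point to watch is that the Dedekind cuts $(I(a)_-,I(a)_+)$ for $a\in A\backslash C$ really do represent $F(a)$ with the expected strict inequalities, but this was already arranged in the preceding construction, where $I^\circ=I(a)_-\cup I(a)_+$ with $I(a)_-$ and $I(a)_+$ consisting of rationals below and above $F(a)$, respectively. The density of $C$ in $A$ is used exactly once, in the final case, to produce the separating witness $c\in C$.
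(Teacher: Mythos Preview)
Your proof is correct. The paper's own proof is shorter: it treats all $a<b$ uniformly by invoking the order density of $C$ in $A$ once to produce $c\in C$ with $a<c<b$, then observes that $f(c)\in I(a)_+\cap I(b)_-$ forces $F(a)<f(c)<F(b)$, regardless of whether $a$ or $b$ lies in $C$. Your four-case split is logically equivalent --- the mixed cases you handle directly are exactly what the uniform argument yields when specialized --- but the paper's version shows that the separating-witness trick already subsumes the cases $a\in C$ or $b\in C$, so no separate bookkeeping is needed. What your decomposition buys is explicitness: it makes transparent that density is only \emph{essential} in the fourth case, whereas the first three follow from the bare definition of the cut.
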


\begin{proof} It is enough to show that $F$ is a strictly
increasing map. Let $a,b\in A$ with $a < b$. Since $C$ is order
dense in $A$, there exists an element $c\in C$ with $a < c < b$.
Then $f(c)\in I(a)_+\cap I(b)_-$ and this implies $F(a)<F(b)$.

\end{proof}

\begin{theorem}\label{1.8.80}  Let $A$ be a linearly
ordered set endowed with the interval topology, let $A$ be a
connected topological space, and let $C$ be a countable subset which
is dense in $A$. Then there exists a strictly increasing
homeomorphism of $A$ onto one of the intervals $I\in U$, which maps
$C$ onto the interval $I_{\hbox{\ccc Q}}$.

\end{theorem}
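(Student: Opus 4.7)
The plan is to combine Corollary~\ref{1.8.70} (which handles the countable subset $C$) with the extension procedure culminating in Lemma~\ref{1.8.800}, leaving only surjectivity and continuity of the resulting map to verify. First I would exploit connectedness: Proposition~\ref{1.8.30} and Corollary~\ref{1.8.35} give that $A$ has no gaps and that every non-empty bounded subset of $A$ admits both $\sup$ and $\inf$. The ``no gaps'' property upgrades the topological density of $C$ to \emph{order density}: for any $a<b$ in $A$, the open interval $(a,b)_A$ is non-empty, hence meets $C$. Adjoining the least and greatest elements of $A$ to $C$ whenever they exist (at most two points) keeps $C$ countable and yields $E\subset C$ as required before Lemma~\ref{1.8.800}. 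Corollary~\ref{1.8.70} then produces an interval $I\in U$ whose extremal configuration matches that of $A$, together with a strictly increasing homeomorphism $f\colon C\to I_{\hbox{\ccc Q}}$, and Lemma~\ref{1.8.800} extends $f$ to a strictly increasing map $F\colon A\to I$ with $F(C)=f(C)=I_{\hbox{\ccc Q}}$.

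The remaining task is to show $F$ is surjective and continuous, whereupon Corollary~\ref{1.8.55} concludes that $F$ is a homeomorphism. Surjectivity onto $I_{\hbox{\ccc Q}}$ is immediate from $F|_C=f$. For irrational $\alpha\in I$, which necessarily lies in the interior $I^\circ$, I would form $L'=\{c\in C\mid f(c)<\alpha\}$; this set is non-empty and bounded above in $A$, hence has a supremum $a=\sup L'$. The order density of $C$, together with the density of $f(C)=I_{\hbox{\ccc Q}}$ around $\alpha$, rules out $a\in C$: if $f(a)<\alpha$, pick a rational strictly between $f(a)$ and $\alpha$, lift via $f^{-1}$ to produce an element of $L'$ strictly greater than $a$, a contradiction; the case $f(a)>\alpha$ is symmetric. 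Hence $F(a)$ is given by the Dedekind cut from the construction, and a short verification yields $I(a)_-=\{r\in I_{\hbox{\ccc Q}}\mid r<\alpha\}$: the inclusion $\subset$ uses that $c<\sup L'$ forces some $c'\in L'$ with $c<c'$, so $f(c)<f(c')<\alpha$; the inclusion $\supset$ follows from $c\in L'\Rightarrow c\leq a$ combined with $c\neq a$ (as $a\notin C$). This identifies $F(a)$ with $\alpha$.

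Continuity at $a\in A$ is then routine: given a basic neighborhood $V$ of $F(a)$ in $I$, approximate $F(a)$ by rationals $p',q'\in I_{\hbox{\ccc Q}}$ with $F(a)\in(p',q')\subset V$, set $a_1=f^{-1}(p')$ and $a_2=f^{-1}(q')$, and use the strict monotonicity of $F$ to obtain $a_1<a<a_2$ together with $F((a_1,a_2)_A)\subset(p',q')_I\subset V$; the one-sided and endpoint cases require only one approximant on the appropriate side. The main obstacle I expect is the surjectivity step, specifically matching the Dedekind cut of the supremum $a\in A\setminus C$ with the cut representing $\alpha$; both this identification and the fact that $a$ avoids $C$ rest on the order density of $C$ in $A$, which is the essential content extracted from the connectedness hypothesis.
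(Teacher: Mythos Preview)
Your proposal is correct and follows essentially the same route as the paper: both build $F$ via Lemma~\ref{1.8.800} and prove surjectivity by taking, for an irrational $\alpha$, the supremum in $A$ of the preimage of the lower Dedekind class and verifying that this point lands outside $C$ and has the right cut. The one substantive difference is the final step: the paper avoids your direct continuity argument entirely by invoking Proposition~\ref{1.8.4},\,(ii), which applies because the subspace topology on an interval $I\subset\hbox{\ccc R}$ coincides with its interval topology (Examples~\ref{1.8.400},\,(2)); this makes the strictly increasing bijection $F$ a homeomorphism without any $\varepsilon$--$\delta$ style approximation. Your route through explicit continuity and Corollary~\ref{1.8.55} is equally valid but slightly longer. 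One small point you glide over: the construction preceding Lemma~\ref{1.8.800} assumes $C$ is a \emph{proper} subset of $A$; the paper secures this via Corollary~\ref{0.9.50} (a countable loset without gaps has a bounded set with no supremum, so by Proposition~\ref{1.8.30} it cannot be all of the connected $A$), and you should cite this as well.
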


\begin{proof} We can suppose $E\subset C$. In accord with
Proposition~\ref{1.8.30}, {\rm (ii)}, $A$
has no gaps and under this condition Corollary~\ref{0.9.50}
guarantees that $C$ has a bounded from above subset $B$ which has
no least upper bound in $C$. In particular,
Proposition~\ref{1.8.30} yields $C\neq A$ because $A$ is
connected. Lemma~\ref{1.8.800} implies that the map $F$
from~(\ref{1.8.79}) is strictly increasing and extends the
homeomorphism $f\colon C\to I_{\hbox{\cccc Q}}$ on $A$. Now, we
will prove the surjectivity of $F$. Let $\alpha$, $1<\alpha<1$, be
an irrational number, represented by the Dedekind cut $(I_-,I_+)$
of the open interval $I^\circ$, so
\[
I^\circ=I_-\cup I_+,
\]
and let $C_-=f^{-1}(I_-)$, $C_+=f^{-1}(I_+)$. Then $C_-$ and $C_+$
are not empty,
\begin{equation}
C'=C_-\cup C_+, \label{1.8.85}
\end{equation}
and $C_-\cap C_+=\emptyset$. Let $a\in C_-$ and $b\in C_+$. Since
$f(a)\in I_-$ and $f(b)\in I_+$, we have $f(a)<f(b)$ and this
inequality implies $a<b$. Thus, $C_-$ consists of lower bounds of
$C_+$ and $C_+$ consists of upper bounds of $C_-$. Let $a_-=\sup
C_-$ and $a_+=\inf C_+$. Then we have $a_-\leq a_+$. Since there
are no gaps in $A$ and since $C$ is dense in $A$, the assumption
$a_- < a_+$ implies existence of an element $c\in (a_-, a_+)\cap
C'$, which contradicts the equality~(\ref{1.8.85}). Therefore
$a=a_- = a_+$ and
\begin{equation}
C_-\subset (\leftarrow,a)\cap C',\hbox{\ } C_+\subset
(a,\rightarrow)\cap C'. \label{1.8.90}
\end{equation}

If $a\in C'$, then the equality~(\ref{1.8.85}) contradicts the
equality~(\ref{1.8.90}). Thus, $a\in A\backslash C$ and we have
\[
C'=C_-\cup C_+\subset (\leftarrow,a)\cap C'\cup
(a,\rightarrow)\cap C'=C'.
\]
The last inclusions imply
\[
C_-=(\leftarrow,a)\cap C',\hbox{\ } C_+=(a,\rightarrow)\cap C'
\]
and then $I(a)_-\subset I_-$, $I(a)_+\subset I_+$, hence
\[
F(a)=(I(a)_-,I(a)_+)=(I_-,I_+)=\alpha
\]
and therefore the map $F$ is surjective. Thus, we have a strictly
increasing bijection $F\colon A\to I$ which is a homeomorphism of
$A$ onto the subspace $I$ of the real line $\hbox{\ccc R}$ because
of Proposition~\ref{1.8.4}, {\rm (ii)}.

\end{proof}

\end{document}